\newtheorem{theorem}{Theorem}[section]
\newtheorem{corollary}[theorem]{Corollary}
\newtheorem{lemma}[theorem]{Lemma}
\theoremstyle{definition}
\newtheorem{definition}[theorem]{Definition}
\newtheorem{remark}[theorem]{Remark}
\newtheorem{example}[theorem]{Example}
\numberwithin{equation}{section}
\newcommand{\eps}{\varepsilon}
\newcommand{\calF}{\mathcal{F}}
\newcommand{\calT}{\mathcal{T}}
\newcommand{\calB}{\mathcal{B}}
\newcommand{\calV}{\mathcal{V}}
\newcommand{\calE}{\mathcal{E}}
\newcommand{\calX}{\mathcal{X}}
\newcommand{\R}{\mathds{R}}
\newcommand{\C}{\mathds{C}}
\newcommand{\prt}{\partial}
\newcommand{\bF}{\mathbf{F}}
\newcommand{\be}{\mathbf{e}}
\newcommand{\ol}{\overline}
\newcommand{\wt}{\widetilde}
\DeclareMathOperator{\dist}{dist}
\def\n{{\bf n}}
\def \be {{\bf e}}
\title[Pinned billiard balls]{Upper bound on the number of collisions
\\ of pinned billiard balls}
\author{Krzysztof Burdzy and Mauricio Duarte}
\address{KB: Department of Mathematics, Box 354350, University of Washington, Seattle, WA 98195}
\email{burdzy@uw.edu}
\address{MD: Departamento de Matematicas, Facultad de Ciencias Exactas, Universidad Andres Bello, Santiago, Chile}
\email{mauricio.duarte@unab.cl}
\thanks{KB's research was supported in part by Simons Foundation Grant 506732. }
\thanks{MD's research was supported in part by Proyecto FONDECYT 1201639.}
\begin{document}

\begin{abstract}

We  consider systems of ``pinned balls,'' i.e.,  balls that  have fixed positions and pseudo-velocities. Pseudo-velocities change according to the same rules as those  for velocities of totally elastic collisions between moving balls. The times of possible pseudo-collisions for different pairs of pinned balls are chosen in an exogenous way.
We  give an explicit upper bound for the maximum number of pseudo-collisions for
a system of $n$ pinned balls in a $d$-dimensional space.  The proof is based on analysis of foldings, i.e., mappings that formalize the idea of folding a piece of paper along a crease. We prove an upper bound for the size of an orbit of a point subjected to foldings.

\end{abstract}

\maketitle

\section{Introduction}
\label{se:intro}

This paper is 
a follow-up to \cite{fold}. The main new results are an upper bound for the size of an orbit in a sequence of foldings and an upper bound for the number of collisions of pinned balls. In the latter case, we give two estimates; one of them depends on the number $n$ of balls and dimension of the space $d$; the other one  depends only on $n$ (see Theorem \ref{au21.7}). 
This is an improvement over the main result proved in \cite{fold}, where the bound depended also on the configuration of the pinned balls (we state that result in the review section as Theorem \ref{au21.6}). Our new bounds are comparable to the best known bounds for the number of collisions of  billiard balls moving and colliding according to the classical model---see Remark \ref{s11.2} (iv).

\subsection{Pinned Balls} 
The article \cite{fold} was inspired by articles on the maximum number of totally elastic collisions for a finite system of balls in a billiard table with no walls (i.e., the whole Euclidean space). We will review the history of this problem in Section \ref{review}. 

The main concern of \cite{fold} was the concept of ``pinned balls.'' In this model, balls have positions and pseudo-velocities, and some balls are in contact with other balls. The balls do not move, i.e., their positions are constant as functions of time. But the pseudo-velocities change according to the same rules as those for velocities of totally elastic collisions between moving balls, except that the order in which these collisions may occur is prescribed in an exogenous way.

Consider two balls capable of moving and assume that they touch at the initial time. Depending on their initial velocities, they may start moving according to these velocities ``away from each other,'' or they will experience an instant collision and will start moving with new velocities. Similarly, a pair of pinned (not moving) touching balls may retain its pseudo-velocities or their velocities will change, according to the same rules as for the moving balls. For this reason, if a pair of pinned balls is chosen and ``allowed'' to collide, their velocities may change or not. If the velocities change, we will say that a pseudo-collision occurred, although most of the time we will abbreviate ``pseudo-collisions'' to ``collisions.'' 

Given a (finite or infinite) sequence of pairs of touching pinned balls, the number of collisions along this sequence may be smaller than the length of the sequence. Our main result, Theorem \ref{au21.7}, gives an upper bound for the number of collisions that depends only on the number of balls and the dimension of the space.
This improves on the main result in \cite{fold} where a bound was proved but depended also  on the positions of the pinned balls.
Neither the bound in \cite{fold} nor the bound in Theorem \ref{au21.7} in the present paper
 depend on the initial pseudo-velocities or the order in which the balls are allowed to collide. 

The pinned balls model is inspired by the usual system of moving and colliding balls. Specifically, the main theorem in \cite{BD} (see Section \ref{review} below) is an example based in an essential way on the analysis of a pinned ball configuration.

As an additional motivation for studying pinned ball configurations, we point out that
there are reasons to think that  a large number of collisions for a system of moving balls can occur only when many of the balls form a tight configuration. Theorem 1.3 in \cite{BuD18b} states that if a family of $n$ balls undergoes more than $n^{cn}$ collisions for an appropriate $c>0$, then there is a subfamily $\calB'$ of balls and an interval of time $[t_1,t_2]$ in which a large number of collisions occur between members of $\calB'$, and the balls in $\calB'$ form a very tight configuration during the whole interval (see  \cite[Thm.~1.3]{BuD18b} for the quantitative version of the qualitative statement presented here).

The examples presented in
\cite{BuI18}, showing that the number of collisions of moving balls can be exponentially large, are based on tight ball configurations.

\subsection{Foldings}
\label{sec:fold}
  A \emph{folding} with respect to a halfspace $H$ in $\R^d$ is the identity on $H$ and maps the complementary halfspace $H'$ onto $H$ via reflection in the hyperplane $P$ which is the common boundary $\partial H' = \partial H$. It has been shown in \cite{fold} that collisions of pinned balls can be represented as foldings (see Section \ref{fold} below).
Consider an infinite sequence of foldings corresponding to a finite number of halfspaces whose intersection has a non-empty interior.  It has been proved in \cite{fold} that for every starting   point, its orbit generated by the sequence of foldings  is finite.
Our main technical result in the present paper, Theorem \ref{f11.3},  is an explicit bound on the orbit size in a sequence of foldings. The bound depends on the number of halfspaces and the size of a ball inside the intersection of the halfspaces, with the center at a distance less than 1 from the starting point. The bound does not depend on the dimension of the space.

\subsection{Hard ball collisions---historical review}
\label{review}

The question of whether a finite system of hard balls can have an infinite number of elastic collisions was posed by Ya.~Sinai. It was answered in negative in \cite{Vaser79}. For alternative proofs see \cite{Illner89, Illner90,IllnerChen,BuD18b}. 
It was proved in \cite{BFK1} that a system of $n$ balls in the Euclidean space undergoing elastic collisions can experience at most
\begin{align}\label{s26.1}
\left( 32 \sqrt{\frac{m_{\text{max}}}{m_{\text{min}}} } 
\frac{r_{\text{max}}}{r_{\text{min}}} n^{3/2}\right)^{n^2}
\end{align}
collisions. Here $m_{\text{max}}$ and $m_{\text{min}}$ denote the maximum and the minimum masses of the balls. Likewise, $r_{\text{max}}$ and $r_{\text{min}}$ denote the maximum and the minimum radii of the balls.
The following alternative upper bound for the maximum number of collisions appeared in \cite{BFK5}
\begin{align}\label{s26.2}
\left( 400 \frac{m_{\text{max}}}{m_{\text{min}}} 
 n^2\right)^{2n^4}.
\end{align}
  The papers \cite{BFK1,BFK2, BFK3,BFK4, BFK5} were the first to present universal bounds \eqref{s26.1}-\eqref{s26.2} on the number of collisions of $n$ hard balls in any dimension. 
The following  upper bound on the number of collisions has been given in \cite{uphard}, for balls with equal masses and radii,
\begin{align}\label{j19.2}
1600 \left( 1000 \cdot 32^{5^d}\right)^n  n^{((3/2)5^d+9/2) n +2} .
\end{align}
The new bound is better than those in \eqref{s26.1}-\eqref{s26.2} for a fixed dimension $d$ and large $n$.

It has been proved in \cite{BD} by example that the  number of elastic collisions of $n$ balls
in $d$-dimensional space is greater than $n^3/27$ for $n\geq 3$ and $d\geq 2$, for some initial conditions. The previously known lower bound was of order $n^2$ (that bound was for balls in dimension 1 and was totally elementary). The lower bound estimate was improved  in \cite{BuI18} to $2^{\lfloor n/2\rfloor}$ in dimensions $d\geq 3$.  

\subsection{Organization of the paper}

Section \ref{notdef} contains notation, definitions and the statement of the main theorem.
Section \ref{fold} is a short introduction to foldings. 
Much of the material in Sections \ref{se:intro}-\ref{fold} is copied from \cite{fold}, to help the reader follow and compare the two papers.
Section \ref{sec_fold} contains the main technical result---an upper bound for the size of an orbit of a point subjected to foldings. It also contains the proof of  Theorem \ref{au21.7}, our main result.
 Section \ref{pin_mov} 
addresses the question of whether pinned ball evolutions can be approximated by moving ball evolutions.
Section \ref{comp} contains a  comparison with 
 earlier results.
 
\section{Notation, definitions and the main result}\label{notdef}

Let $\calB(x, r)$ denote the closed ball with center $x$ and radius $r$ in a  Euclidean space (the dimension of which will be clear from the context).

We will consider a family of $n$ balls $\bF=\{\calB(x_1,1), \dots,\calB(x_n,1)\}$ in $\R^d$, for $d\geq 1$ and $n\geq 3$. We will assume that the interiors of the balls are disjoint but the balls may touch, i.e., for some pairs of balls, the distance between their centers is equal to 2.

We will say that $(\calV, \calE)$ is the \emph{full graph} associated with the family $\bF$ of $n$ pinned balls if   $\calV=\{x_1,x_2, \ldots , x_n\}$,  and   vertices $x_j$ and $x_k$ are connected by an edge if and only if $|x_j-x_k|=2$, i.e., if the balls $j$ and $k$ touch. An edge connecting $x_j$ and $x_k$ will be denoted $(j,k)$.

We will say that $(\calV_1, \calE_1)$ is a \emph{graph}  associated with the family $\bF$ if $(\calV_1, \calE_1)$ is a subgraph of the full graph $(\calV, \calE)$ associated with $\bF$, in the sense that $\calV_1 \subset \calV$ and $\calE_1 \subset \calE$.

Note that a full graph is not a complete graph in the graph-theoretic sense unless the  centers of the balls form the vertex set of a simplex.

We associate a pseudo-velocity $v_k\in \R^d$ to the $k$-th ball, for $k=1,\dots, n$. We call $v_k$ a pseudo-velocity because the balls do not move---their centers, i.e., $x_k$'s, are fixed. However, the pseudo-velocities will change due to pseudo-collisions as in an evolution of a family of billiard balls with totally elastic collisions. 
We will now define  mappings $\calT_{ij}: \R^{nd} \to \R^{nd}$ for  $1\leq i,j\leq n$. 
Let $ v  = (v_1, v_2,\dots, v_n)\in \R^{nd}$ and $\calT_{ij}( v ) =  w  =(w_1,\dots,w_n)$, with $w_k\in\R^d$ for every $k$, so that we can define $\calT_{ij}$ by specifying the values of $w_k$'s.

First, we let $w_k = v_k$ for every $k\ne i,j$. In other words, a collision between balls $i$ and $j$ does not affect the velocity of any other ball.

If the balls  $i$ and $j$ do not touch (i.e., $|x_i-x_j| > 2$) then we let $w_k = v_k$ for all $k$. Heuristically, balls which do not touch cannot collide.

If the balls  $i$ and $j$  touch and
\begin{align*}
(v_{i} - v_{j}) \cdot (x_{i} - x_{j}) \geq  0
\end{align*}
then once again there is no collision, i.e., we let $w_k = v_k$ for all $k$.

Finally, assume that the balls  $i$ and $j$  touch, i.e., $|x_i-x_j| = 2$, and
\begin{align*}
(v_{i} - v_{j}) \cdot (x_{i} - x_{j}) <  0.
\end{align*}
Let $u  = (x_{i} - x_{j})/|x_{i} - x_{j}|$.
Then we let
\begin{align}\label{a23.2}
w_{i} &= v_{i} + (v_{j} \cdot u) u
- (v_{i} \cdot u) u,\\
w_{j} &= v_{j} + (v_{i} \cdot u) u
- (v_{j} \cdot u) u.\label{a23.3}
\end{align}
In other words, the balls exchange the components of their pseudo-velocities that are parallel to the line through their centers; the orthogonal components  remain unchanged. This rule is identical to the classical totally elastic collision.

\begin{definition}
 The term ``collision'' 
will refer to the situation when $\calT_{ij}( v ) \ne v$, i.e., the (pseudo-)velocities change.
\end{definition}

Suppose that $(\calV_1, \calE_1)$ is a graph associated with a family  of pinned balls. Consider a sequence $\Gamma=(\gamma_j, j\geq 1)$, such that $\gamma_j \in \calE_1$ for every $j$. 
If $\gamma_j$ is an edge connecting vertices $x_i$ and $x_m$ then $\calT_{\gamma_j}$ should be interpreted as $\calT_{im}$.
For $v\in \R^{nd}$, let $v(0) = v$ and $v(j) = \calT_{\gamma_j}( v(j-1))$ for $j\geq 1$. 
The sequence $\Gamma$ represents the exogenous order of possible collisions for the system of pinned balls.

Note that  $v(j) = v(j-1)$ for some $\bF, v, \calE_1,\Gamma$ and $j$, i.e., a collision does not have to occur at every time $j$. 

Let $\tau_d$ denote
the  kissing number  of a $d$-dimensional ball, i.e.,  the maximum
number of mutually nonoverlapping translates of the ball that can be arranged
so that they all touch the ball.
According to  \cite[Thm.~1.1.3]{Bez}, 
\begin{align}\label{s11.1}
2^{0.2075d(1+ o(1))} \leq \tau_d \leq 2^{0.401d(1+o(1))}.
\end{align}
An elementary non-asymptotic bound is  
\begin{align}\label{f15.1}
2d\leq \tau_d \leq 3^d-1.
\end{align}

Our main result is the following upper bound for the number of collisions of pinned balls.

\begin{theorem}\label{au21.7}
For any given $\bF, v(0), \calE_1$ and $\Gamma$,
the number of collisions is bounded by
\begin{align}\label{f16.1}
 2^{-27}  2^{16\ell} n^{-27/2} n^{15\ell/2} .
\end{align}
where $n$ is the number of pinned balls, $d$ is the dimension of the space, and $\ell = \frac12 n \min \left( \tau_d ,\ n\right)$.
\end{theorem}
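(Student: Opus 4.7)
The plan is to reduce the collision counting problem to an orbit counting problem for foldings and then invoke Theorem \ref{f11.3}. The reduction comes from \cite{fold} and is reviewed in Section \ref{fold}: on the configuration space of pseudo-velocities $\R^{nd}$, each collision operator $\calT_{ij}$ acts as a folding across the hyperplane
\[
P_{ij} = \{v \in \R^{nd} : (v_i - v_j) \cdot (x_i - x_j) = 0\},
\]
with preferred halfspace $H_{ij} = \{v : (v_i - v_j) \cdot (x_i - x_j) \geq 0\}$. The rules \eqref{a23.2}--\eqref{a23.3} are precisely the reflection formulas across $P_{ij}$, and the condition $(v_i - v_j) \cdot (x_i - x_j) \geq 0$ is exactly the condition that $\calT_{ij}$ acts as the identity. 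Consequently, the trajectory $(v(j))_{j \geq 0}$ is an orbit under a sequence of foldings, and the number of collisions is at most the number of distinct points in this orbit.

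Next, the number of distinct foldings used is at most $|\calE_1|$, which is bounded by the number of edges in the full graph associated with $\bF$. Since each pair of balls contributes at most one edge, this count is at most $\binom{n}{2} \leq n^2/2$. Since each ball can touch at most $\tau_d$ other balls by the definition of the kissing number, each vertex of the full graph has degree at most $\tau_d$, so $|\calE| \leq n\tau_d/2$. Combining these bounds gives $|\calE_1| \leq \tfrac12 n \min(\tau_d, n) = \ell$.

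The crux of the argument is to exhibit a ball of explicit radius inside the intersection $\bigcap_{(i,j) \in \calE_1} H_{ij}$ whose center lies within distance $1$ of $v(0)$, so that Theorem \ref{f11.3} is applicable. The natural candidate for a point deep inside the intersection is $v^* := (x_1, \ldots, x_n) \in \R^{nd}$, since for every edge $(i,j)$ one has $(v^*_i - v^*_j) \cdot (x_i - x_j) = |x_i - x_j|^2 = 4$; a short triangle-inequality argument then shows that a ball around $v^*$ of any radius strictly less than $1$ lies inside every $H_{ij}$. Because each $P_{ij}$ passes through the origin, the intersection is a cone with apex at $0$, so such a ball may be rescaled along the ray through $v^*$, yielding a one-parameter family of admissible balls in the cone. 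After normalizing the scale (which does not affect the orbit dynamics since foldings across hyperplanes through $0$ preserve $|v|$), this family can be used to produce a ball whose center is within distance $1$ of $v(0)$ and whose radius is bounded below by an explicit, dimension-free, polynomial-in-$n$ quantity.

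Feeding the halfspace count $|\calE_1| \leq \ell$ and the explicit radius produced above into Theorem \ref{f11.3} then yields the bound \eqref{f16.1}. The main obstacle is the construction in the third step: trading off the center-to-$v(0)$ distance against the radius while keeping both estimates dimension-free and explicit in $n$. This trade-off is precisely what is reflected in the exponents $16\ell$ and $15\ell/2$ (from applying a per-halfspace factor that scales like a power of $1/r$), and in the offset factor $2^{-27} n^{-27/2}$ (absorbing the constants introduced by the rescaling and triangle-inequality arguments).
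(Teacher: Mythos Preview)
Your overall architecture matches the paper's: identify collisions with foldings via \eqref{au21.2}, bound the number of half-spaces by $\ell=\tfrac12 n\min(\tau_d,n)$, exhibit a ball of explicit radius inside $\bigcap H_{ij}$ close to the initial pseudo-velocity vector, and invoke Theorem~\ref{f11.3}. The third step, however, contains a real gap.

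Your interior point $v^*=(x_1,\dots,x_n)$ does sit at distance $\sqrt{2}$ from every $P_{ij}$, and the cone structure does allow rescaling. But the radius you obtain after rescaling so that the center lies within distance $1$ of (the normalized) $v(0)$ is proportional to $\sqrt{2}/|v^*|$, and $|v^*|^2=\sum_k|x_k|^2$ is \emph{not} controlled by $n$: the ball centers $x_k$ may be placed arbitrarily far from the origin. So the claim that the resulting radius is ``bounded below by an explicit, dimension-free, polynomial-in-$n$ quantity'' is false for this particular $v^*$.

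The paper repairs this in two moves you omit. First, every hyperplane $P_{ij}$ contains the entire diagonal subspace $\{(y,\dots,y):y\in\R^d\}$, so one may replace $v^*$ by $v^*-(x_1,\dots,x_1)=(0,x_2-x_1,\dots,x_n-x_1)$ without changing its distance to any $P_{ij}$. Second, one reduces at the outset to the case where the full graph of $\bF$ is connected (otherwise slide components until they touch, which can only enlarge $\calE$); connectedness gives $|x_k-x_1|\le 2(n-1)$, hence $\sum_k|x_k-x_1|^2\le 4n(n-1)^2$. Normalizing this translated vector to the unit sphere then yields the ball of radius $r=1/(\sqrt{2n}(n-1))$ inside $\bigcap H_{ij}$, and after a further scaling by $2$ (since $|w_0-v(0)|\le 2$ rather than $\le 1$) one gets $r=2^{-3/2}/(\sqrt{n}(n-1))$. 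Plugging this $r$ and $\ell$ into Theorem~\ref{f11.3} and simplifying the constants produces \eqref{f16.1}; that arithmetic is also absent from your write-up.
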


The proof of the theorem is given at the end of Section \ref{sec_fold}.

\section{Foldings}\label{fold}

We will  define foldings and show that collisions of a system of pinned balls can be represented as foldings.

Suppose that $h\in \R^d$ has unit length, and a closed half-space $H\subset \R^d$ is given by
\begin{align*}
H = \{v\in \R^{d } : \langle v , h\rangle \geq 0\}.
\end{align*}
It is clear that $\partial H = \left\{ v\in \R^d : \langle v , h\rangle = 0 \right\}$, in particular,  $0\in \partial H$. We define a \emph{folding} $\calF_H: \R^d\to \R^d$ relative to $H$ by
\begin{align*}
\calF_H(v) = 
\begin{cases}
v & \text{  if  } v\in H, \\
v - 2 \langle v , h\rangle h & \text{  if } v\notin H.
\end{cases}
\end{align*}
In other words, $\calF_H$ is the identity on $H$ and it is the reflection in the hyperplane $\prt H$ on the complement of $H$. Note that every folding is non-expansive, i.e., $$\dist\left(\calF_H(x) , \calF_H(y) \right) \leq \dist(x,y)$$ for all $x,y$ and $H$. 

\medskip

We will now represent transformation $\calT_{ij}$ as a folding.
For $k=1,\dots,n$ and $x\in \R^d$, let
\begin{align*}
x_{[k]} = ( \underbrace{0,\dots,0}_{(k-1)d}, x, \underbrace{0,\dots,0}_{(n-k)d} ) \in \R^{nd}.
\end{align*}
Let $\calE$ be the edge set for the full graph representing a family $\bF$. For $(j,k)\in \calE$, we let 
\begin{align}
\wt  z _{jk} &:= ( x_j - x_k)_{[j]} + (x_k - x_j)_{[k]} \in \R^{nd},
\label{j14.1}\\
  z _{jk} &:= \wt  z _{jk}/|\wt  z _{jk}| = 2^{-3/2}\wt  z _{jk} .
\label{au20.2}
\end{align}
Let $H_{jk}= \{ w  \in \R^{nd}:  \langle w , z _{jk}\rangle \geq 0\}$. The transformation $\calT_{ij}$ defined in \eqref{a23.2}-\eqref{a23.3} is the same as the folding  $\calF_{H_{i j}}$ in $\R^{nd}$, i.e.,
\begin{align}\label{au21.2}
\calT_{ij}=\calF_{H_{i j}}.
\end{align}
Note that $z_{ij} = z_{ji}$, hence $H_{ij} = H_{ji}$.

\medskip

Recall that $\calB(x, r)$ denotes the closed ball with center $x$ and radius $r$. The following result was proved in \cite{fold}.

\begin{theorem}\label{au20.1}
Suppose that $H_1, \dots, H_\ell$ are closed half-spaces in $\R^d$ such that their intersection $H_* :=\bigcap_{k=1}^\ell H_k$ has a non-empty interior. Assume that $(i_j)_{j\geq 1}$ is a sequence of integers  such that  $1 \leq i_j \leq \ell$ for all $j$. Suppose that $v_1\in \R^d$ and for $j> 1$, let $v_{j+1} = \calF_{H_{i_j}}  (v_{j})$.
Then there exists $k<\infty$ such that $v_j = v_k$ for all $j\geq k$. 
\end{theorem}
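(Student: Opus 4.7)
The plan is to use the squared distance to an interior point of $H_*$ as a Lyapunov function. Pick $p\in\mathrm{int}(H_*)$ and $r>0$ with $\calB(p,r)\subset H_*$; then $\langle p,h_k\rangle\geq r$ for every $k$, where $h_k$ is the inward unit normal of $H_k$, and $p\in H_k$ makes $p$ a common fixed point of every $\calF_{H_k}$. Non-expansiveness applied at this shared fixed point gives $|v_{j+1}-p|\leq|v_j-p|$, so the orbit is confined to a fixed ball around $p$.

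Next I would extract a quantitative decrease whenever an actual (non-identity) folding occurs. If $v_j\notin H_{i_j}$, set $s_j=-\langle v_j,h_{i_j}\rangle>0$; then $v_{j+1}=v_j+2s_jh_{i_j}$, and a direct expansion yields
\begin{align*}
|v_{j+1}-p|^2 \;=\; |v_j-p|^2 - 4s_j\langle p,h_{i_j}\rangle \;\leq\; |v_j-p|^2 - 4r s_j.
\end{align*}
Telescoping bounds $\sum_j s_j$ by $|v_1-p|^2/(4r)<\infty$. Since $|v_{j+1}-v_j|=2s_j$, the increments of $(v_j)$ are summable, so $(v_j)$ converges in $\R^d$ to some $v_\infty$, and in particular $s_j\to 0$.

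The main obstacle is upgrading this qualitative convergence to finite-time stabilization. Partition $\{1,\dots,\ell\}$ into the indices appearing only finitely often in $(i_j)$ — these can be discarded after a long enough initial segment — and those appearing infinitely often. For the latter, $s_j\to 0$ forces $\langle v_\infty,h_k\rangle\geq 0$; if the inequality is strict, continuity eventually places $v_j$ strictly inside $H_k$, so $\calF_{H_k}$ acts trivially from then on. What remains are the halfspaces with $\langle v_\infty,h_k\rangle=0$, whose boundary hyperplanes contain both $0$ and $v_\infty$. When $v_\infty\neq 0$ every such $\prt H_k$ must contain the line $\R v_\infty$, and after translating so that $v_\infty$ becomes the origin the tail dynamics restricts to a folding problem on a subspace of strictly smaller dimension, which invites induction on $d$. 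The degenerate case $v_\infty=0$, where the remaining halfspaces form a cone with apex at the limit point, requires a separate compactness argument on the unit sphere together with the summability of $s_j$ to show that only finitely many foldings of positive depth can accumulate at the vertex. This last geometric/combinatorial step is exactly the content that Theorem \ref{f11.3} of the present paper is designed to quantify.
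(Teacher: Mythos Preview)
The paper does not give its own proof of this statement; it is quoted from \cite{fold}. (The quantitative Theorem~\ref{f11.3} proved later does independently imply it: rescale so that $|w_0-v_1|\leq 1$ and read off a finite bound on the number of jumps. But if you are prepared to invoke Theorem~\ref{f11.3}, your entire Lyapunov argument becomes superfluous.)

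Your Lyapunov computation and the conclusion $v_j\to v_\infty$ are correct, and your partition into index classes (A), (B), (C) is the right move. The endgame, however, is mishandled. You missed a feature built into the paper's definition of a folding: every boundary $\partial H_k$ passes through the origin, so each $\calF_{H_k}$ is (pointwise on $\R^d$) either the identity or the linear reflection in $\partial H_k$, and in either case an isometry fixing every point of $\partial H_k$. For indices $k$ left in your class (C) you have $\langle v_\infty,h_k\rangle=0$, i.e.\ $v_\infty\in\partial H_k$, hence $|\calF_{H_k}(v)-v_\infty|=|v-v_\infty|$ for all $v$. Therefore for $j\geq J$ the quantity $|v_j-v_\infty|$ is constant; since you already proved $v_j\to v_\infty$, this constant is $0$ and the orbit has stabilized at step $J$. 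No case split $v_\infty=0$ versus $v_\infty\neq 0$, no induction on $d$, and no appeal to Theorem~\ref{f11.3} is needed.

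Your proposed induction on $d$ is not wrong as stated, but it does not close the argument: in the reduced $(d-1)$-dimensional problem the new limit is the projection of $v_\infty$ onto $v_\infty^\perp$, which is $0$, so the induction immediately throws you back into your ``degenerate'' case at every step. Deferring that case to Theorem~\ref{f11.3} is circular in spirit, since that theorem alone already yields the full statement.
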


\begin{remark}\label{s28.6}
Let the set of all distinct elements of the sequence $(v_j, j\geq 1)$  be called an orbit. The orbit depends on  half-spaces $H_k$, sequence $(i_j, j\geq 1)$ and $v_1$. There is no upper bound on the size (cardinality) of the orbit that 
depends only on $n$, that is, the number of half-spaces $H_k$. To see this,
consider half-planes $H_1$ and $H_2$ defined in complex notation by $H_j=\{w \in \C: w \cdot e^{i \theta_j}\geq 0\}$, for $j=1,2$. It is easy to see that for any $m$, one can generate orbits with more than $m$ points by choosing $\theta_1$ and $\theta_2$ so that $|(\theta_1-\theta_2) - \pi|$ is non-zero but very small.
Hence, a universal bound for the orbit size depending only on $n$ does not exist. 

\end{remark}

\section{A bound on the number of foldings}\label{sec_fold}

Remark \ref{s28.6} shows that we need an extra assumption to find a bound for the orbit size. The following definition presents such an assumption. The assumption is chosen to make the bound applicable to counting of pinned ball collisions. 

\begin{definition}\label{f10.1}
Suppose $\{H_k\}_{1\leq k\leq \ell}$ are closed half-spaces in $\R^d$, for some $d\geq 2$ and $\ell\geq 1$.  
Suppose that $v_1, w_0 \in \R^d$, $0<r<1$, $\calB(w_0, r) \subset \bigcap_{k=1}^\ell H_k$, and $|w_0 - v_1| \leq 1$.
Assume that   $\{\alpha_j\}_{j\geq 1}$ is a sequence of integers such that $1\leq \alpha_j \leq \ell$ for all $j$. Let $v_{k+1} = \calF_{H_{\alpha_k}}(v_k)$ for $k\geq 1$.

(i) We will say that a jump occurred at time $k$ if $v_{k+1} \ne v_k$.

(ii) Given $\ell\geq 1$ and $0<r<1$, the maximum number of jumps in the sequence $\{v_k\}_{k\geq 1}$
will be denoted $N(\ell,r)$, where the maximum is taken over all $d\geq 2$, $\{H_k\}_{1\leq k\leq \ell}$, $v_1, w_0 \in \R^d$, $\calB(w_0, r)$ and   $\{\alpha_j\}_{j\geq 1}$ satisfying the conditions stated at the beginning of the definition.

\end{definition}

\begin{lemma}\label{f11.1}
If $\ell\geq 3$ and $0<r<1$ then $N(\ell,r) \leq 308 r ^{-5}N(\ell-1,r)$.

\end{lemma}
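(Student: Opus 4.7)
My plan is to induct on $\ell$ by isolating a single half-space, say $H_\ell$ after relabeling. The key bookkeeping device is to partition the sequence of foldings into \emph{epochs} separated by $H_\ell$-jumps: if $N_\ell$ denotes the number of $H_\ell$-jumps, then there are $N_\ell + 1$ epochs, and within each epoch only foldings in $H_1, \dots, H_{\ell-1}$ can produce jumps. Since every folding is non-expansive and fixes each point of $\calB(w_0, r)$, in particular $w_0$, we have $|v_k - w_0| \leq |v_1 - w_0| \leq 1$ throughout the whole sequence. Thus each epoch satisfies the hypotheses of Definition \ref{f10.1} with the $\ell - 1$ half-spaces $H_1, \dots, H_{\ell - 1}$ (whose intersection still contains $\calB(w_0, r)$), and hence contributes at most $N(\ell - 1, r)$ jumps. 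This yields
\[
N(\ell, r) \leq N_\ell + (N_\ell + 1)\, N(\ell - 1, r),
\]
and reduces the lemma to showing $N_\ell \leq C\, r^{-5}$ for an appropriate absolute constant $C$.

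To bound $N_\ell$, I would run an energy argument based on the squared distance $|v_k - w_0|^2$. Fix a point $p_0 \in \partial H_\ell$ and an inward unit normal $n_\ell$, and set $\pi_\ell(v) = \langle v - p_0, n_\ell \rangle$. Using $\calF_{H_\ell}(v) = v - 2 \pi_\ell(v)\, n_\ell$ for $v \notin H_\ell$, a direct computation gives, at every $H_\ell$-jump,
\[
|v_{k+1} - w_0|^2 - |v_k - w_0|^2 = 4\, \pi_\ell(v_k)\, \pi_\ell(w_0) \leq -4r\, |\pi_\ell(v_k)|,
\]
since $\pi_\ell(v_k) < 0$ by the definition of a jump and $\pi_\ell(w_0) \geq r$ because $\calB(w_0, r) \subset H_\ell$. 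The quantity $|v_k - w_0|^2$ is non-increasing under every folding and is bounded in $[0, 1]$, so summing over all $H_\ell$-jump times $J_\ell$ yields the energy bound
\[
\sum_{k \in J_\ell} |\pi_\ell(v_k)| \leq \frac{1}{4r}.
\]
If one had a fixed lower bound $|\pi_\ell(v_k)| \geq c(r)$ at each $H_\ell$-jump, then $N_\ell \leq 1/(4 r\, c(r))$ would follow at once.

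The main obstacle is that $|\pi_\ell(v_k)|$ can a priori be arbitrarily small. My plan to handle this is a dyadic split, writing $A_i = \{k \in J_\ell : 2^{-i-1} \leq |\pi_\ell(v_k)| < 2^{-i}\}$, so that the energy bound gives $|A_i| \leq 2^i/(2r)$, which controls the ``big'' jumps but is useless for large $i$. For very small jumps, the orbit is barely perturbed, so I would replace $H_\ell$ by the enlarged half-space $H_\ell^\delta = \{v : \pi_\ell(v) \geq -\delta\}$ for a carefully chosen threshold $\delta = \delta(r)$. In the perturbed system, jumps with $|\pi_\ell(v_k)| < \delta$ no longer register; meanwhile a ball of radius close to $r - \delta$ still fits inside $H_\ell^\delta \cap \bigcap_{j < \ell} H_j$ (with center shifted slightly in the $n_\ell$ direction), so that Definition \ref{f10.1} still applies to the perturbed data with radius close to $r$. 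Balancing the number of remaining ``big'' $H_\ell$-jumps, of order $1/(r \delta)$, against the degradation of the inductive radius and the accumulated drift from ignoring small jumps, and then substituting into the bookkeeping inequality of the first paragraph, should produce the target bound $N(\ell, r) \leq 308\, r^{-5}\, N(\ell - 1, r)$ once $\delta$ is optimized.

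The hardest step, in my view, is precisely the comparison between the true orbit and the perturbed orbit under $H_\ell^\delta$: one must argue that tracking only the ``big'' jumps does not lose control of the remaining dynamics, while the non-expansiveness of foldings is used to propagate the small-jump error. Getting the constants down to the explicit $308\, r^{-5}$ (rather than merely some $O(r^{-\alpha})$) will require tight choice of $\delta$ as a concrete power of $r$ and a careful accounting of how the factor $(N_\ell + 1)$ interacts with the slightly shrunken inductive radius across all epochs.
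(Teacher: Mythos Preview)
Your bookkeeping identity and energy bound are correct, and the epoch structure you describe is exactly the skeleton of the paper's argument. The genuine gap is in bounding $N_\ell$. You fix one half-space $H_\ell$ for the entire argument, and then must confront the possibility of arbitrarily many $H_\ell$-jumps with $|\pi_\ell(v_k)|$ arbitrarily small. Your dyadic bound $|A_i|\le 2^i/(2r)$ diverges as $i\to\infty$, so it gives no control on the count. Your perturbation idea---replacing $H_\ell$ by $H_\ell^\delta$---is not a proof: the perturbed and true orbits differ at \emph{every} $H_\ell$-fold (not only the small ones, since the reflecting hyperplane has moved), the discrepancy is not governed by non-expansiveness alone, and you have no mechanism to reconstruct the true jump count from the perturbed one. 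You yourself flag this comparison as ``the hardest step,'' and nothing in the proposal indicates how it would be closed; there is no reason to believe it yields $O(r^{-5})$ rather than some unspecified power or no bound at all.

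The paper sidesteps this entirely by \emph{not} fixing the removed half-space. Its key geometric lemma (Step~1) shows that for a suitable $\delta=\delta(r,\eps)$, the orbit can never be simultaneously $\delta$-close to all the hyperplanes $\partial H_k$; the proof is a nontrivial contradiction argument using the ball $\calB(w_0,r)$ inside $\bigcap H_k$ and a monotone functional $\langle v_j,\be\rangle$. Given this, at the start of each epoch one picks a half-space $H_{k_i}$ that is currently $\delta$-far; the epoch runs, with at most $N(\ell-1,r)+1$ jumps, until the first jump through $H_{k_i}$, which is then guaranteed to have size $\ge\delta$. A separate planar estimate (Step~2) converts a jump of size $\ge\delta$ into a definite decrease $\ge\delta r^3/8$ in $|v_j-w_0|$, and since the total available decrease is at most $2\eps$, the number of epochs is bounded by $16\eps/(r^3\delta)+2$, from which the constant $308\,r^{-5}$ falls out after optimizing an auxiliary parameter. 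The crucial difference from your plan is that the ``removed'' half-space is allowed to change from epoch to epoch, so every epoch-ending jump is automatically large and no small-jump analysis is needed.
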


\begin{proof}
Suppose $\{H_k\}_{1\leq k\leq \ell}$ are closed half-spaces in $\R^d$,   $v_1, w_0 \in \R^d$, 
\begin{align}\label{f20.1}
0<r<1,\qquad \calB(w_0, r) \subset \bigcap_{k=1}^\ell H_k,
\qquad |w_0 - v_1| \leq 1.
\end{align}
Assume that   $\{\alpha_j\}_{j\geq 1}$ is a sequence of integers such that $1\leq \alpha_j \leq \ell$ for all $j$. Let $v_{k+1} = \calF_{H_{\alpha_k}}(v_k)$ for $k\geq 1$.
Let $\eta \in \left( \sqrt{8}-2, 1 \right)$ and define
\begin{align}\label{f20.2}
\eps = \dist\left(v_1,\bigcap_{k=1}^\ell H_k\right),
\qquad \delta = 4\eps r^2 \left(\frac{1-\eta}{4+3\eta r}\right),
\qquad \rho = r-\delta .
\end{align}
If $\eps = 0$ then the number of jumps is zero. From now on we will assume that $\eps>0$. It follows from \eqref{f20.1} that $\eps\leq 1$.

It is elementary to check that for $r>0$ and $0< \eps \leq 1$, it holds that $\delta\leq \eps r^2 (1-\eta) < r(1-\eta)$. It follows that $\rho > \eta r$. Therefore,
\begin{align*}
\frac{\rho \eps}{2} - \frac{3\delta}{2} - \frac{2\delta}{\rho} \geq \frac{\eta r \eps}{2} - \frac{3\delta}{2} - \frac{2\delta}{\eta r} =  \frac{\eta r \eps}{2} - \left( \frac{3\eta r+4}{2\eta r} \right) \delta.
\end{align*}

We combine this with the definition \eqref{f20.2} of $\delta$ and the assumption that $\eta \in \left( \sqrt{8}-2, 1 \right)$ to obtain
\begin{align}\label{f9.5}
\frac{\rho \eps}{2} - \frac{3\delta}{2} - \frac{2\delta}{\rho} \geq \left( \frac{(\eta+2)^2-8}{2\eta}\right) r\eps > 0.
\end{align}

\medskip
\emph{Step 1}.
In this step we will show that for every $j\geq 1$ there exists $k$ such that $\dist(v_j, \prt H_k) > \delta$. Suppose otherwise, i.e., there exists $j_*\geq 1$ such that if we let $w_1=v_{j_*}$ then for all $k$,
\begin{align}\label{f4.2}
\dist(w_1, \prt H_k)=\dist(v_{j_*}, \prt H_k) \leq \delta.
\end{align}
We will prove that this assumption leads to a contradiction.

\bigskip

We start by establishing three preliminary inequalities.

Suppose that $w_1 \notin \bigcap_{k=1}^\ell H_k$ and, therefore, there is $k$ such that $w_1 \notin H_{k}$. Consider any $k$ such that $w_1 \notin H_{k}$.
Let $L$ be the line passing through $w_0$ and $w_1$.
Let $a_k \in L$ be such that  $\ol{w_0,a_k}= \ol{w_0,w_1} \cap H_{k}$. Since 
$|w_0-w_1| \leq 1$, $\dist(w_0, \prt H_{k}) \geq r$, and $\dist(w_1, \prt H_{k})\leq \delta$, elementary geometry shows that $|w_1 - a_k|\leq \delta/(r+\delta)$. It is easy to see that there is a unique point $a_*$ such that  $\{a_*\}= L\cap \prt \left( \bigcap_{k=1}^\ell H_k\right)$. There exists $k_1$ such that $\{a_*\}= L\cap \prt H_{k_1}$ and $w_1 \notin H_{k_1}$.
Then
\begin{align*}
\dist\left(w_1, \bigcap_{k=1}^\ell H_k\right)
\leq \dist(w_1,a_*) = |w_1 - a_{k_1}|\leq \delta/(r+\delta).
\end{align*}
The bound also holds in the case when $w_1 \in \bigcap_{k=1}^\ell H_k$. Hence,
\begin{align}
\label{j11.1}
|v_1 - w_1| \geq  \dist\left(v_1, \bigcap_{k=1}^\ell H_k\right)
- \dist\left(w_1, \bigcap_{k=1}^\ell H_k\right)
\geq \eps - \delta/(r+\delta).
\end{align}

Let $\be = (w_1-w_0)/|w_1-w_0|$ and let $\n_k$ be the unit inward normal vector to $\prt H_k$. 
Since $\rho= r-\delta \leq (r-\delta)/|w_0 - w_1|$, $\calB(w_0, r) \subset \bigcap_{k=1}^\ell H_k$, and $\dist(w_1, \prt H_k) \leq \delta$ for all $k$, elementary geometry shows that  for all $k$,
\begin{align}\label{f4.1}
\langle \n_k, \be\rangle \leq - \rho.
\end{align}

Let
$w_2 = w_1 + (2\delta/\rho) \be$. It follows from the fact that $|\langle \n_k, w_1\rangle |= \dist(w_1, \prt H_k)$ and bounds  \eqref{f4.2} and \eqref{f4.1},  that $w_2 \in \bigcap_{k=1}^\ell H_k^c$.
In view of \eqref{j11.1},
\begin{align}\label{f9.3}
|v_1 - w_2|\geq |v_1 - w_1| - |w_1-w_2| \geq\eps- \delta/(r+\delta)-2\delta/\rho\geq\eps- 3\delta/\rho.
\end{align}

\medskip

We will use the inequalities established above to estimate the distance between orbit points $v_k$ and $w_1$.
If $\alpha_k = j$ and $v_k\notin H_j$ then $v_{k+1} = v_k + \n_j | v_{k+1} - v_k |$ and, by \eqref{f4.1},
\begin{align}\label{f4.3}
\langle v_{k+1}, \be\rangle &= \langle v_{k}, \be\rangle + \langle \n_j, \be\rangle | v_{k+1} - v_k |
\leq \langle v_{k}, \be\rangle -\rho| v_{k+1} - v_k |.
\end{align}
The estimate holds also in the case when $v_k\in H_j$, i.e., when $v_{k+1} = v_k $. By \eqref{f4.3} and the triangle inequality, for any $m>k$,
\begin{align}\label{f4.4}
\langle v_{m}, \be\rangle 
- \langle v_{k}, \be\rangle
= \sum_{j=k}^{m-1}  \langle v_{j+1}, \be\rangle 
- \langle v_{j}, \be\rangle
\leq -\sum_{j=k}^{m-1} \rho |v_{j+1} - v_j|
 \leq -\rho|v_{m} - v_k|.
\end{align}

Consider any $1\leq m\leq j$ and suppose that $\langle \be, v_m\rangle \leq \langle \be, w_2\rangle $.
Let $a= |v_m - w_2|$.
 If $|v_j-v_m| \leq a/2$ then, by the triangle inequality, $|v_j - w_2|\geq a/2$.
If $|v_j-v_m| \geq a/2$ then, by \eqref{f4.4},
\begin{align*}
\langle v_{j}, \be\rangle 
- \langle w_2, \be\rangle
\leq
\langle v_{j}, \be\rangle 
- \langle v_{m}, \be\rangle
\leq - \rho|v_{j} - v_m| \leq -\rho a/2. 
\end{align*}  
Since $\rho\leq r \leq 1$, we obtain
\begin{align}\label{f8.3}
|v_j - w_2|\geq 
\min(a/2, \rho a/2)=
\rho a/2 = \rho |v_m - w_2| /2,
\end{align}
 for all $j\geq m$ if 
$\langle \be, v_m\rangle \leq \langle \be, w_2\rangle $.

Assume that $\langle \be, v_1\rangle \leq \langle \be, w_2\rangle $.
Then  by \eqref{f9.3} and \eqref{f8.3}, for all $j\geq 1$,
\begin{align}\label{f9.1}
|v_j - w_2|\geq \rho|v_1 - w_2|/2 \geq \rho(\eps-3\delta/\rho)/2.
\end{align}

Next suppose that $\langle \be, v_1\rangle \geq \langle \be, w_2\rangle $.
We will use induction. Suppose that $\langle \be, v_j\rangle \geq \langle \be, w_2\rangle $ and $|v_j - w_2|\geq \eps-3\delta/\rho$. Note that by \eqref{f9.3} and the current assumption, these conditions hold for $j=1$.

Suppose that $v_{j+1} = \calF_{H_k}(v_j)$.
Recall that $L$ is the line passing through $w_0$ and $w_1$, and $a_k$ is the intersection point of $L$ and $\prt H_k$. Then $|v_{j+1}-a_k| = |v_{j}-a_k|$, and thus
\begin{align*}
|v_{j+1}-w_2|^2 &= |v_{j+1}-a_k|^2 + 2 \langle v_{j+1} -a_k, a_k-w_2\rangle + |a_k-w_2|^2 \\
&= |v_{j}-a_k|^2 + 2 \langle v_{j+1} -a_k, a_k-w_2\rangle + |a_k-w_2|^2 \\
&= |v_{j}-w_2|^2 + 2 \langle v_{j+1} - v_j, a_k-w_2\rangle.
\end{align*}
The points $w_2, a_k$, and $w_0$ are colinear, and since $w_2 \in \bigcap_{k=1}^\ell H_k^c$, it follows that $\langle \be, a_k\rangle \leq \langle \be, w_2\rangle $. Hence, $a_k-w_2 = -|a_k-w_2|\be$. Combining this with the above formula we obtain  
\begin{align*}
|v_{j+1}-w_2|^2 &=  |v_{j}-w_2|^2 + 2 |a_k-w_2| \langle v_j - v_{j+1} , \be\rangle.
\end{align*}
The inner product on the right hand side is positive by \eqref{f4.3}. Using the induction assumption, we conclude that
\begin{align*}
|v_{j+1} - w_2|\geq|v_j - w_2|\geq \eps-3\delta/\rho.
\end{align*}
It follows by induction that if $\langle \be, v_i\rangle \geq \langle \be, w_2\rangle $ for all $i=1,\dots,j$ then $|v_{j+1} - w_2|\geq \eps-3\delta/\rho$.

Let $i$ be the largest index such that  $\langle \be, v_i\rangle \geq \langle \be, w_2\rangle $. Then for $j=1,2,\dots, i+1$,
\begin{align}\label{f9.2}
|v_{j} - w_2|\geq  \eps-3\delta/\rho.
\end{align}
By the definition of $i$, $\langle \be, v_{i+1}\rangle \leq \langle \be, w_2\rangle $. Therefore, by \eqref{f8.3} and \eqref{f9.2}, for all $j\geq i+1$,
\begin{align}\label{f9.4}
|v_j - w_2|\geq  \rho |v_{i+1} - w_2| /2
\geq  \rho (\eps-3\delta/\rho) /2.
\end{align}
It follows from \eqref{f9.1}, \eqref{f9.2} and \eqref{f9.4} that for all $j\geq 1$,
\begin{align*}
|v_j - w_2|\geq   \rho (\eps-3\delta/\rho) /2.
\end{align*}
Hence, for all $j\geq 1$, by \eqref{f9.5},
\begin{align*}
|v_j - v_{j_*}|&=|v_j - w_1| \geq 
|v_j - w_2|-|w_1-w_2|\geq   \rho (\eps-3\delta/\rho) /2 -2\delta/\rho\\
&= \frac{\rho \eps}{2} - \frac{3\delta}{2} - \frac{2\delta}{\rho} \geq \left( \frac{(\eta+2)^2-8}{2\eta}\right) r\eps > 0.
\end{align*}
This is a contradiction, because we can take $j=j_*$. This completes the proof that for every $j\geq 1$ there exists $k$ such that $\dist(v_j, \prt H_k) \geq \delta$.

\medskip
\emph{Step 2}.
Consider $v_i, v_j$ for some $i<j$ and assume that $|v_i-w_0|=R>r$ and $|v_i-v_j|\geq \delta_1>0$. We will estimate the difference $|v_j-w_0|-|v_i-w_0|$. Consider the two dimensional plane holding $v_i, v_j$ and $w_0$, illustrated in Fig. \ref{fig6}. The points are arranged in our illustration so that $v_i$ and $w_0$ lie on a vertical line, $v_i$ is above $w_0$, and $v_j$ is below $v_i$ or to the left of the line passing through $v_i$ and $w_0$. 
Let $L$ denote the line passing through $w_0$ and orthogonal to $\ol{v_i w_0}$.
Let $z_1\in L$ be  such that $|z_1-w_0| = r$, and $z_1$ and $v_j$ lie on the opposite sides of $\ol{v_i w_0}$.

\begin{figure} \includegraphics[width=0.6\linewidth]{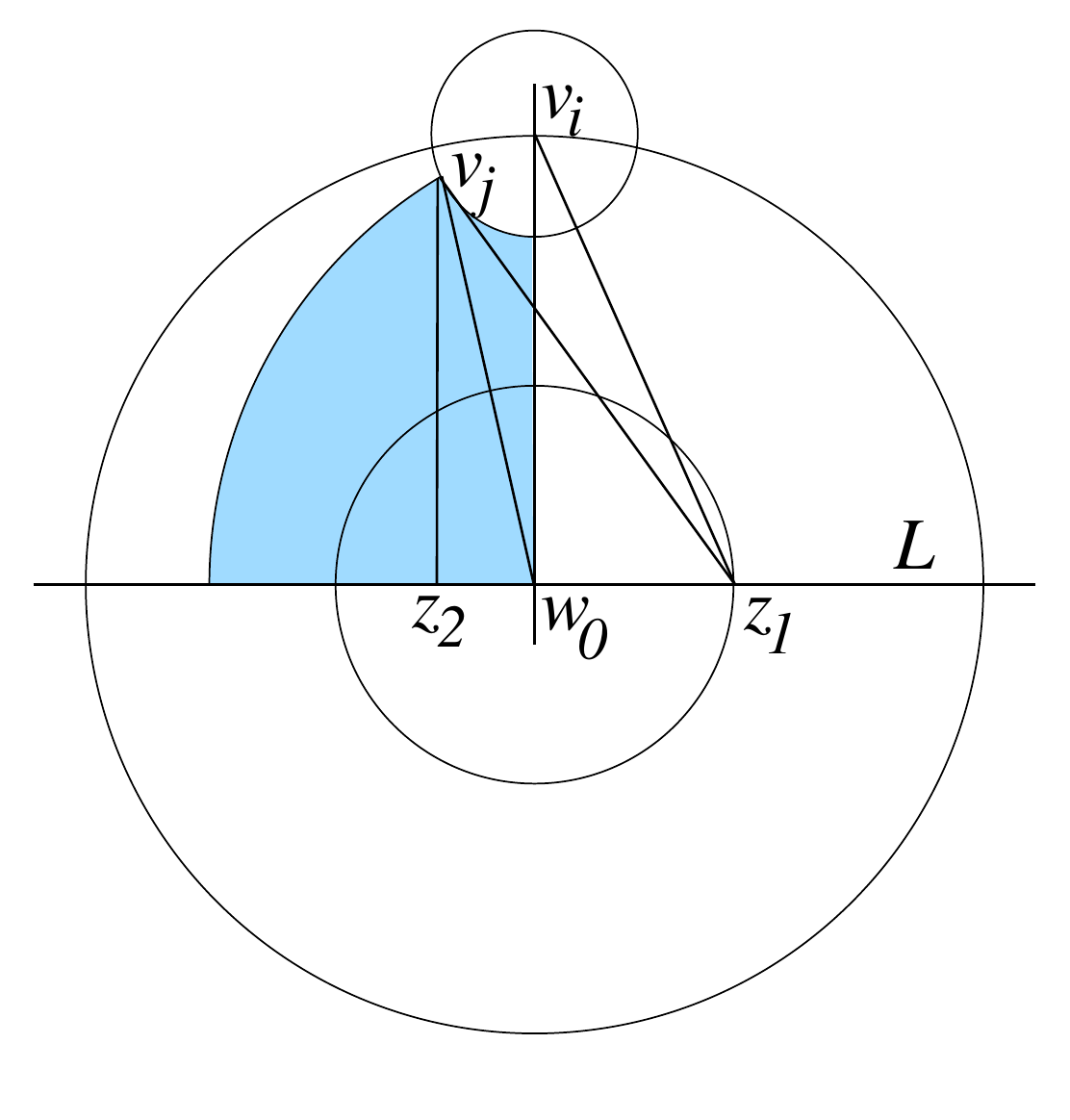}
\caption{The point $v_j$ must be located in the blue region if it is located above $L$.}
\label{fig6}
\end{figure}

We will consider two cases---when $v_j$ lies above $L$ or below $L$. Fig. \ref{fig6} illustrates the case when $v_j$ lies above $L$.
Since $z_1\in\calB(w_0,r)$, $z_1$ belongs to all half-spaces $H_k$. Foldings do not increase distance so we must have $|v_j-z_1| \leq |v_i - z_1|$. The blue region in Fig. \ref{fig6} is the closure of the intersection of the following sets: $\calB(z_1, |v_i-z_1|)$, half-plane to the left of $\ol{v_i w_0}$, half-plane above $L$, and the exterior of $\calB(v_i, \delta_1)$. The point $v_j$ must lie in the blue region. It is elementary to see that among all points in the blue region, the largest distance from $w_0$ is attained at the intersection of the circles $\prt \calB(z_1, |v_i-z_1|)$ and $\prt \calB(v_i, \delta_1)$. This is where we placed $v_j$ in Fig. \ref{fig6}. Our estimate will apply to all other possible positions of $v_j$ in the blue region.

Let $z_2$ be the orthogonal projection of $v_j$ onto $L$.
Let
\begin{align*}
\alpha &= \angle (\ol{v_i z_1}, \ol{v_jz_1}),\\
\beta &= \angle (\ol{v_i z_1}, \ol{w_0 z_1}),\\
\gamma &= \angle (\ol{v_j z_1}, \ol{w_0 z_1})= \beta - \alpha,\\
a&= |v_i-z_1|= |v_j-z_1| = \sqrt{r^2+R^2}.
\end{align*}
We have
\begin{align*}
&\sin(\alpha/2) = (\delta_1/2)/ a,\\
& \cos(\alpha/2) = \sqrt{1 - \sin^2(\alpha/2)}
= \sqrt{1 - \delta_1^2 /(4a^2)},\\
& \sin \alpha = 2 \sin(\alpha/2) \cos(\alpha/2)
= \frac {\delta_1} a \sqrt{1 - \delta_1^2 /(4a^2)},\\
& \cos \alpha = \sqrt{1 - \sin^2\alpha}
= 1- \frac{\delta_1^2}{2 a^2},\\
&\cos \beta = r/a,\\
&\sin \beta = \sqrt{1 - \sin^2 \beta} = \sqrt{1 - r^2 /a^2},\\
&\cos \gamma = \cos (\beta - \alpha)
= \cos \alpha \cos \beta + \sin \alpha \sin \beta\\
&\qquad = \left(1- \frac{\delta_1^2}{2 a^2}\right) \frac r a 
+ \frac {\delta_1} a \sqrt{1 - \delta_1^2 /(4a^2)} 
\sqrt{1 - r^2 /a^2},\\
&|v_j - z_2| = |v_j - z_1| \sin \gamma = a \sin \gamma,\\
& |z_2 - w_0| = |z_2 - z_1| - r = |v_j - z_1| \cos \gamma -r
= a \cos \gamma -r,\\
&|v_j-w_0| ^2 = |v_j - z_2|^2 + |z_2 - w_0|^2
= a^2 \sin^2 \gamma + (a \cos \gamma -r)^2\\
&\qquad = a^2 - 2 a r \cos\gamma + r^2\\
&\qquad =a^2 + r^2 - 2 a r\left[\left(1- \frac{\delta_1^2}{2 a^2}\right) \frac r a 
+ \frac {\delta_1} a \sqrt{1 - \delta_1^2 /(4a^2)} 
\sqrt{1 - r^2 /a^2}\right]\\
&\qquad =a^2 + r^2 -2r^2 +  \frac{2r^2\delta_1^2}{2 a^2}
-2 r \delta_1 \sqrt{1 - \delta_1^2 /(4a^2)} 
\sqrt{1 - r^2 /a^2}\\
&\qquad = a^2 - r^2 +  \frac{r^2\delta_1^2}{ a^2}
- \frac {r\delta_1}{a^2}\sqrt{4a^2 - \delta_1^2 } 
\sqrt{a^2 - r^2 }\\
&\qquad = R^2+r^2 - r^2 +  \frac{r^2\delta_1^2}{ R^2+r^2}
- \frac {r\delta_1}{R^2+r^2}\sqrt{4(R^2+r^2) - \delta_1^2 } 
\sqrt{R^2+r^2 - r^2 }\\
&\qquad = R^2 +  \frac{r^2\delta_1^2}{ R^2+r^2}
- \frac {rR\delta_1}{R^2+r^2}\sqrt{4(R^2+r^2) - \delta_1^2 }.
\end{align*}
Note that $\delta_1 \leq 2 R$ and, therefore, $\sqrt{4(R^2+r^2) - \delta_1^2 } \geq 2 r$. Hence,
\begin{align*}
&|v_j-w_0| ^2 \leq
R^2 +  \frac{r^2\delta_1^2}{ R^2+r^2}
- \frac {2r^2R\delta_1}{R^2+r^2}
= R^2 -\delta_1   \frac{r^2}{ R^2+r^2}
(2R -\delta_1)\\
&\qquad \leq R^2 -\delta_1  r^2
(2R -\delta_1)/2.
\end{align*}
If $v_j$ is located in the blue region then $\delta_1 \leq \sqrt{2} R$ and, therefore,
\begin{align}\notag
&|v_j-w_0| ^2 \leq R^2 -\delta_1  r^2
(2R -\sqrt{2} R)/2 \leq R^2 -\delta_1  r^3
(2 -\sqrt{2} )/2 ,\\
&|v_j-w_0|  \leq\sqrt{R^2 -\delta_1  r^3
(2 -\sqrt{2} )/2 }
= R\sqrt{1 -\delta_1  r^3
(2 -\sqrt{2} )/(2R^2) }\notag\\
&\qquad \leq  R(1 -\delta_1  r^3
(2 -\sqrt{2} )/(4R^2) ) = R -\delta_1  r^3
(2 -\sqrt{2} )/(4R)\notag\\
&\qquad \leq |v_i-w_0| -\delta_1  r^3/8. \label{m20.1}
\end{align}

Next consider the case when $v_j$ is below the line $L$, as illustrated in Fig. \ref{fig7}. Let $z_3$ lie on the line segment between $v_i$ and $w_0$, and be such that $|z_3 - w_0|=r$.
\begin{figure} \includegraphics[width=0.6\linewidth]{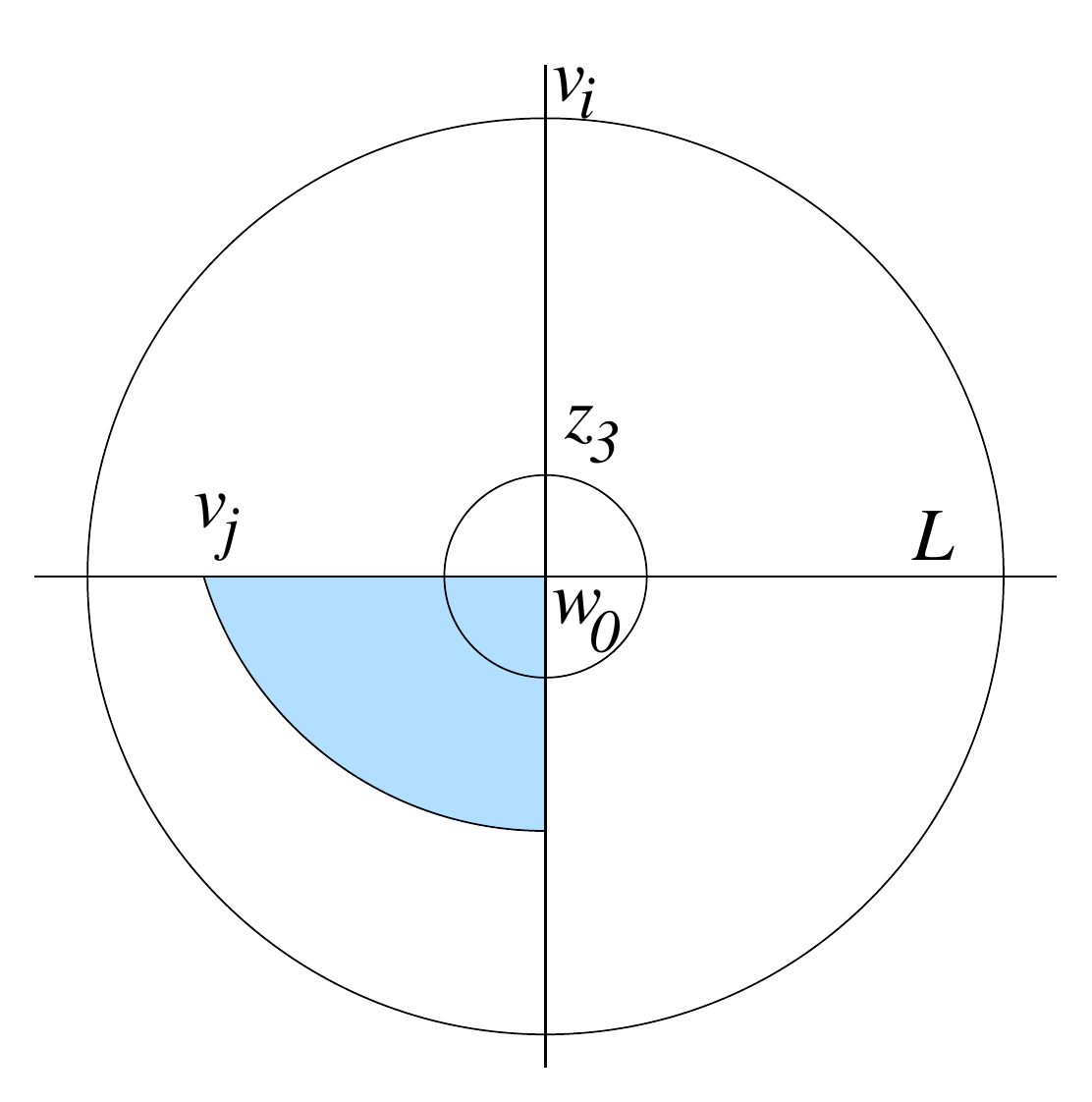}
\caption{The point $v_j$ must be located in the blue region if it is located below $L$.}
\label{fig7}
\end{figure}
Since $z_3\in\calB(w_0,r)$, $z_3$ belongs to all half-spaces $H_k$. Foldings do not increase distance so we must have $|v_j-z_3| \leq |v_i - z_3|$. The blue region in Fig. \ref{fig7} is the intersection of the following sets: $\calB(z_3, |v_i-z_3|)$, half-plane to the left of $\ol{v_i w_0}$ and half-plane below $L$. Among all points in the blue region, the largest distance from $w_0$ is attained at the intersection of  $\prt \calB(z_3, |v_i-z_3|)$ and $L$. This is where we placed $v_j$ in Fig. \ref{fig7}. Our estimate will apply to all other possible positions of $v_j$ in the blue region.
\begin{align*}
&|v_j - w_0|^2 = |v_j-z_3|^2 - |z_3 - w_0|^2
= |v_i-z_3|^2 - r^2
= (|v_i-w_0|-r)^2 - r^2\\
&\qquad =
|v_i-w_0|^2 - 2 r |v_i-w_0|\leq |v_i-w_0|^2 - 2 r^2,\\
&|v_j - w_0| \leq \sqrt{|v_i-w_0|^2 - 2 r^2}
= |v_i-w_0|\sqrt{1 - 2 r^2/|v_i-w_0|^2}\\
&\qquad \leq |v_i-w_0| (1 -  r^2/|v_i-w_0|^2)
= |v_i-w_0|  -  r^2/|v_i-w_0|
\leq |v_i-w_0|  -  r^2.
\end{align*}
Since $\delta_1 \leq 2$ and $r\leq 1$, the last estimate and \eqref{m20.1} imply that 
\begin{align}\label{m20.2}
&|v_j - w_0| \leq  |v_i-w_0|  - \min(\delta_1  r^3/8,  r^2)
\leq  |v_i-w_0|  -\delta_1  r^3/8.
\end{align}

\medskip

\emph{Step 3}.
We will use an inductive argument, by recursively constructing three sequences. Let $j_1=1$ and let $k_1$ be the smallest integer such that $\dist(v_1, \prt H_{k_1}) \geq \delta$. Let $m_1 \in [1,\infty]$ be the minimum number such that $\alpha_{m_1} = k_1$ and $v_{m_1+1}\ne v_{m_1}$. 
We interpret $m_1=\infty$ as non-existence of $m_1$ satisfying these conditions.
The number of jumps at times $ 1, \dots, m_1$ is bounded by $N(\ell-1,r)+1$. 
Note that  the bound applies also  when $m_1=1$ or $m_1=\infty$.

Suppose that $m_1<\infty$. Then $v_{m_1}\notin H_{k_1}$ and $v_{m_1+1}\in H_{k_1}$.
If $v_1 \notin H_{k_1}$ then $|v_{m_1+1}-v_1|\geq \delta$ and  by \eqref{m20.2},
\begin{align}\label{f1.1}
|v_{m_1+1}-w_0| \leq |v_1-w_0|- \delta  r^3/8.
\end{align}
If $v_1 \in H_{k_1}$ then  $|v_{m_1}-v_1|\geq \delta$ and  by \eqref{m20.2},
\begin{align}\label{f9.6}
|v_{m_1+1}-w_0| \leq|v_{m_1}-w_0| \leq |v_1-w_0|- \delta  r^3/8.
\end{align}

If $j_i, k_i$, and $m_i$ have been defined, $j_i<\infty$  and $m_i < \infty$, 
then we let $j_{i+1}= m_i+1$. Otherwise $j_{i+1} =\infty$. If $j_{i+1} <\infty$, let $k_{i+1}$ be the smallest integer  such that $\dist(v_{j_{i+1}}, \prt H_{k_{i+1}}) \geq \delta$.
Let $m_{i+1} \in [j_{j+1},\infty]$ be the minimum number such that $\alpha_{m_{i+1}} = k_{i+1}$ and $v_{m_{i+1}+1}\ne v_{m_{i+1}}$. 
We interpret $m_{i+1}=\infty$ as non-existence of $m_{i+1}$ satisfying these conditions.
The number of jumps at times $ j_{i+1}, \dots, m_{i+1}$ is bounded by $N(\ell-1,r)+1$. 
The bound applies also in the case when $m_{i+1}=j_{i+1}$ or $m_{i+1}=\infty$.
If $m_{i+1}<\infty$ then $v_{m_{i+1}}\notin H_{k_1}$ and $v_{m_{i+1}+1}\in H_{k_1}$. The same argument that yields \eqref{f1.1} and \eqref{f9.6} gives
\begin{align*}
|v_{m_{i+1}+1}-w_0| \leq |v_{j_{i+1}}-w_0|- \delta  r^3/8.
\end{align*}
This implies that for all $i\geq 0$ such that $j_{i+2} < \infty$,
\begin{align}\label{f23.5}
|v_{j_{i+2}}-w_0|=
|v_{m_{i+1}+1}-w_0| \leq |v_{j_{i+1}}-w_0|- \delta  r^3/8.
\end{align}

Recall definition \eqref{f20.2} and let $y_1 \in \bigcap_{k=1}^\ell H_k$ be such that $\dist(v_1, y_1) = \eps$. The point $y_1$ is invariant under all foldings, and foldings do not increase distance between points so $\dist(v_j, y_1) = \eps$ for all $j$. Hence for all $j\geq 1$, 
\begin{align*}
0 \leq |v_{1}-w_0| - |v_{j}-w_0|  \leq |v_1 - v_j| 
\leq \dist(v_1, y_1) +\dist(v_j, y_1) \leq 2\eps.
\end{align*}
This and \eqref{f23.5} imply that 
if $j_i<\infty$  then  
\begin{align}\label{f2.1}
-2\eps&\leq 
|v_{j_{i}}-w_0|-|v_{1}-w_0|
= \sum_{k=1}^{i-1}\left(
|v_{j_{k+1}}-w_0|-|v_{j_k}-w_0|
\right)\\
&\leq \sum_{k=1}^{i-1} -\delta  r^3/8
= -(i-1) \delta  r^3/8.\notag
\end{align}
We  see that if $j_i<\infty$  then
$(i-1) \delta  r^3/8 \leq 2\eps$, which is equivalent to 
$i \leq 16 \eps/ (r^3 \delta) +1$.
Hence, $m_i = \infty$  for some $i \leq 16 \eps/ (r^3 \delta) +2$.
Recall that the number of jumps at times $ j_{i+1}, \dots, m_{i+1}$ is bounded by $N(\ell-1,r)+1$. 
Hence, using \eqref{f20.2} we can bound   the number of jumps  by 
\begin{align*}
&(16 \eps/ (r^3 \delta) +2)(N(\ell-1,r)+1)
= \left(\frac{4}{r^5} \left( \frac{4+3\eta r}{1-\eta} \right)  +2\right)(N(\ell-1,r)+1).
\end{align*}
This expression  is an upper bound for the number of jumps for any value of $\eta \in \left( \sqrt{8}-2, 1 \right)$. The quotient  $ \frac{4+3\eta r}{1-\eta} $ is an increasing function of $\eta$ for every fixed $r \in (0,1)$. Thus, we can optimize the bound by taking the limit as $\eta \downarrow\sqrt{8}-2$. It follows that the number of jumps is bounded by
\begin{align*}
&\left(\frac{4}{r^5} \left( \frac{4+3(\sqrt{8}-2) r}{3-\sqrt{8}} \right)  +2\right)  (N(\ell-1,r)+1) \\
&\quad\leq \left(\frac{4}{r^5} \left( \frac{3\sqrt{8}-2}{3-\sqrt{8}} \right)  +2\right)(N(\ell-1,r)+1) 
\leq \left( \frac{10\sqrt{8}-2}{r^5(3-\sqrt{8})} \right)  (N(\ell-1,r)+1) \\
&\quad\leq \frac{154}{r^5}  (N(\ell-1,r)+1) \leq \frac{308}{r^5}  N(\ell-1,r).
\end{align*}
\end{proof}

\begin{lemma}\label{f11.2}
If  $0<r<1$ then
$N(2,r) \leq \pi/r+1$.
\end{lemma}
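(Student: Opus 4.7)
My plan is to reduce the problem to two dimensions and then carry out an explicit angular analysis of alternating reflections. First, if $\partial H_1 \parallel \partial H_2$, either the halfspaces coincide or are nested (in which case $N\le 1$), or $H_1\cap H_2$ is a slab of width at least $2r$. In the slab subcase, alternating reflections in the two parallel hyperplanes translate the orbit's depth coordinate by $4r$ per pair of jumps, and the initial offset $|v_1-w_0|\le 1$ forces $N\le 1/(2r)+1<\pi/r+1$.

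In the main case, let $P=\partial H_1\cap\partial H_2$ (codimension two), let $p$ be the orthogonal projection of $v_1$ onto $P$, and let $\Pi$ be the two-plane through $v_1$ perpendicular to $P$. Because each folding fixes $P$ pointwise and preserves two-planes perpendicular to $P$, the orbit stays in $\Pi$. In $\Pi$, the lines $L_k:=\Pi\cap\partial H_k$ meet at $p$ and bound a wedge $W=\Pi\cap H_1\cap H_2$ of half-angle $\alpha\in(0,\pi/2)$. Writing $w^*$ for the orthogonal projection of $w_0$ onto $\Pi$, one checks that $\dist(w^*,L_k)=\dist(w_0,\partial H_k)\ge r$, so the closed disk of radius $r$ in $\Pi$ about $w^*$ lies in $W$. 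In particular $D\sin\alpha\ge r$, where $D:=|w^*-p|$, so $\alpha\ge\arcsin(r/D)\ge r/D$. Moreover $|v_1-w^*|\le|v_1-w_0|\le 1$, and the orbit lies on the circle $C$ of radius $R:=|v_1-p|$ about $p$.

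The key structural fact is that jumps must strictly alternate between $\calF_{H_1}$ and $\calF_{H_2}$: after a $\calF_{H_k}$-jump the image lies in $H_k$, so no subsequent $\calF_{H_k}$ can be a jump until a $\calF_{H_{3-k}}$-jump occurs. Place $p$ at the origin of $\Pi$ with $L_1,L_2$ at angles $\pm\alpha$, and let $\theta_j$ denote the angle of $v_j$. Reflection in $L_k$ sends $\theta$ to $\pm 2\alpha-\theta$; assuming without loss of generality that the first jump is $\calF_{H_1}$, so that $\theta_1>\alpha$, a direct induction shows that the orbit enters the wedge after exactly $k$ jumps if and only if $\theta_1\in((2k-1)\alpha,(2k+1)\alpha]$. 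Hence $N\le\theta_1/(2\alpha)+1/2$.

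It remains to bound $\theta_1$ using the proximity of $v_1$ to $w^*$. If $D\le 1$, then $\alpha\ge\arcsin r\ge r$, and the trivial estimate $\theta_1\le\pi+\alpha$ gives $N\le\pi/(2\alpha)+1\le\pi/(2r)+1$. If $D>1$, then $p$ lies outside the disk of radius $1$ in $\Pi$ about $w^*$ containing $v_1$, and the tangent-from-a-point construction yields $|\theta_1-\psi|\le\arcsin(1/D)\le\pi/(2D)$, where $\psi\in[-\alpha,\alpha]$ is the angle of $w^*$; combined with $\psi\le\alpha$ this gives $\theta_1\le\alpha+\pi/(2D)$, and the inequality $D\alpha\ge r$ then yields $N\le 1+\pi/(4r)$. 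In both cases $N\le\pi/r+1$. The main obstacle is the unbounded-$D$ regime, where $\alpha$ can be arbitrarily small; the tangent-construction bound on $\theta_1$ is precisely what compensates.
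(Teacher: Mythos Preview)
Your proof is correct and follows essentially the same route as the paper's: reduce to a two-dimensional wedge, track the orbit in angular coordinates, and bound the number of jumps by (initial angle)$/$(wedge angle). Your argument is in fact more complete than the paper's: you carry out the dimension reduction explicitly via the orthogonal projection onto the $2$-plane $\Pi$ perpendicular to $\partial H_1\cap\partial H_2$ (the paper simply asserts ``it will suffice to consider $d=2$''), you treat the parallel-boundary case explicitly, and by splitting into $D\le 1$ and $D>1$ you avoid the implicit assumption $|z-w_0|\ge 1$ that the paper's one-line estimate $\theta_1\le\arcsin(1/|z-w_0|)$ relies on. Your resulting constants $\pi/(2r)+1$ and $\pi/(4r)+1$ are even a bit sharper than the target $\pi/r+1$.
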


\begin{proof}
It is easy to see that it will suffice to consider  $d= 2$.
Consider two half-planes $H_1$ and $H_2$ in $\R^2$.
Suppose that there exist $v_1, w_0 \in \R^d$, $0<r<1$, such that $\calB(w_0, r) \subset  H_1 \cap H_2$ and $|w_0 - v_1| \leq 1$.

Assume that $\prt H_1$ and $\prt H_2$ intersect at a (finite) point $z$ and
let $\alpha\in(0,\pi]$ be the angle of the wedge $H_1 \cap H_2$. 
If $v_1=z$ then there will be no jumps. Assume that $v_1\ne z$.

Let $L$ be the half-line bisecting $H_1 \cap H_2$. Let $\theta_k\in[0,\pi]$ be the angle between $L$ and the half-line starting at $z$ and passing through $v_k$. If $\theta_k\leq \alpha/2$ then $\theta_{k+1}= \theta_k$. 

Suppose that $\theta_1>\alpha/2$. 
It is easy to see that $\theta_2 \leq \theta_1$. Then $v_k \in H_1$ or $v_k \in H_2$, for every $k\geq 2$. This easily implies that  $\theta_{k+1}= \theta_k-\alpha$ for $k\geq 2$. Hence the number of jumps is bounded by $\theta_1/\alpha +1$.
We have
\begin{align*}
\alpha &\geq \arcsin\left(\frac{r/2}{|z-w_0|}\right)
\geq \frac{r/2}{|z-w_0|},\\
\theta_1 &\leq \arcsin\left(\frac{1}{|z-w_0|}\right)
\leq \frac{\pi/2}{|z-w_0|},\\
\theta_1/\alpha +1 & \leq \frac{\pi/2}{|z-w_0|} \cdot \frac{|z-w_0|}{r/2}+1
=\pi/r+1.
\end{align*}
It is easy to check that the upper bound $\pi/r+1$ applies also in the case when $\prt H_1$ and $\prt H_2$ do not intersect.
\end{proof}

\begin{theorem}\label{f11.3}
If $\ell\geq 2$ and $0<r<1$ then
\begin{align*}
N(\ell,r) \leq  2\pi\,  308^{\ell-2} r^{-5(\ell-2)-1}.
\end{align*}
\end{theorem}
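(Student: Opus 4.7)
The plan is to prove this by induction on $\ell$, using Lemma \ref{f11.2} as the base case and Lemma \ref{f11.1} as the inductive step. Since both of these results are already in hand, the proof should be essentially a one-line recursion argument followed by a routine verification of the base case constant.

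For the base case $\ell = 2$, I would invoke Lemma \ref{f11.2} which gives $N(2,r) \leq \pi/r + 1$, and compare this to the claimed bound $2\pi \cdot 308^{0} \cdot r^{-1} = 2\pi/r$. Since $0 < r < 1$, we have $1 \leq 1/r$, so $\pi/r + 1 \leq (\pi+1)/r \leq 2\pi/r$ because $\pi > 1$ implies $\pi + 1 < 2\pi$. This settles the base case.

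For the inductive step, assume the bound holds for $\ell - 1$ where $\ell \geq 3$, i.e., $N(\ell-1, r) \leq 2\pi \cdot 308^{\ell-3} r^{-5(\ell-3)-1}$. Applying Lemma \ref{f11.1}, which states $N(\ell,r) \leq 308\, r^{-5} N(\ell-1, r)$, we obtain
\begin{align*}
N(\ell, r) \leq 308\, r^{-5} \cdot 2\pi \cdot 308^{\ell-3} r^{-5(\ell-3)-1} = 2\pi \cdot 308^{\ell-2} r^{-5(\ell-2)-1},
\end{align*}
which matches the claimed bound. Since each step is a direct application of an already-proved lemma, there is no substantial obstacle here; the only thing to be mindful of is ensuring the constants in the base case fit the general formula, which is why the extra factor of $2$ (rather than $1$) appears in front of $\pi$ in the statement.
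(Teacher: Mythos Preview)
Your proof is correct and follows essentially the same approach as the paper: induction on $\ell$ with Lemma~\ref{f11.2} supplying the base case $N(2,r)\leq \pi/r+1\leq 2\pi/r$ and Lemma~\ref{f11.1} providing the inductive step. The paper's argument is identical in structure and brevity.
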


\begin{proof}
By Lemmas \ref{f11.1} and \ref{f11.2}, and induction,
\begin{align*}
N(2,r) &\leq \pi/r+1 \leq 2\pi/r,\\
N(\ell,r) & \leq 2\pi r^{-1} (308 r^{-5}) ^{\ell-2}
=2\pi \, 308^{\ell-2} r^{-5(\ell-2)-1}.
\end{align*}
\end{proof}

\begin{proof}[Proof of Theorem \ref{au21.7}]
We will assume that the full graph associated with $\bF$ is connected.
Otherwise, we can move connected  components so that they touch each other. This can only increase the number of possible collisions.

Let $ w  = (w_1, w_2, \dots, w_n)$ where $w_k = c (x_k-x_1)$ for $k=1,\dots,n$ and $c>0$ is chosen  so that $\sum_{k=1}^n |w_k|^2 = 1$, i.e., $w\in \prt \calB(0,1)$.
Recall that we have assumed that the full graph associated with the family of pinned balls is connected. This implies that $|x_k-x_1| \leq 2 (n-1)$ for every $k$, and, therefore,  $\sum_{k=1}^n |x_k-x_1|^2 \leq 4 n (n-1)^2 $. Hence, $c^2\geq 1/(4n(n-1)^2)$.
We have
\begin{align}\label{au6.1}
 w  \cdot  z _{ij} &= 
(w_i - w_j) \cdot (x_i - x_j)
/ |\wt  z  _{ij}|
= 2^{-3/2}
(w_i - w_j) \cdot (x_i - x_j)\\
&= 2^{-3/2}c(x_i-x_1- x_j+x_1) \cdot (x_i - x_j)
 =2^{-3/2} c |x_i - x_j|^2 \nonumber \\
&\geq 2^{-3/2}\cdot \frac 1 {2\sqrt{n}(n-1)}\cdot 2^2
= \frac 1 {\sqrt{2n}(n-1)}.\nonumber
\end{align}
This shows that 
if $r= 1/(\sqrt{2n}(n-1))$ then
$\calB(w, r) \subset \bigcap H_{ij} $, where the intersection extends over all half-spaces $H_{ij}$ defined in Section \ref{fold}.
In Definition \ref{f10.1}, we take $w_0$ equal to $w$ defined in this proof, and we take $v_1$ in that definition to be the vector of the initial pseudo-velocities. Since $w, v_1\in \prt \calB(0,1)$, we have $|w-v_1|\leq 2$. Hence, if we use dilation with scaling factor 2, the effective $r$ is twice as small as $r= 1/(\sqrt{2n}(n-1))$, i.e., we should take $r= 2^{-3/2}/(\sqrt{n}(n-1))$.

If the number of balls is $n$ then the number  of pairs of touching balls is bounded by $\tau_d n /2$; it is also bounded by  $n(n-1)/2 \leq n^2/2$.
Hence, Theorem \ref{f11.3} can be applied with  $\ell=\tau_d n /2$ or $\ell=  n^2/2$.

By the identification of collisions and foldings given in \eqref{au21.2}, and Theorem \ref{f11.3}, the number of collisions is bounded by
\begin{align*}
&2\pi\,  308^{\ell-2} \left( \frac{2^{-3/2}}{\sqrt{n}(n-1)} \right)^{-5(\ell-2)-1}
\leq 2\pi\ 308^{\ell-2}\ 2^{15(\ell-2)/2+3/2} \ n^{15(\ell-2)/2+3/2}  \\
&= 2\pi\ 308^{-2}\ 2^{-15+3/2}\ 308^{\ell}\ 2^{15\ell/2} \ n^{15\ell/2-27/2}
\leq 2^{-27}  2^{16\ell} n^{-27/2} n^{15\ell/2}.
\end{align*}
The bound in \eqref{f16.1} follows from the fact that the above expression is increasing in $\ell$, and we can apply it with the two possible values for $\ell$, i.e., $\ell=\tau_d n /2$ or $\ell=  n^2/2$.
\end{proof}

\section{Pinned and moving billiard balls}\label{pin_mov}

One may ask whether an upper bound for the number of collisions of pinned balls  could be derived from the upper bounds \eqref{s26.1}-\eqref{j19.2} for the number of collisions of moving billiard balls. A possible strategy might be the following. 

Proof strategy (PS):
Show that for any system of $n$ pinned balls, a sequence $\Gamma$ of pairs of balls (see Section \ref{notdef} for the definition and meaning) and arbitrarily small $\eps >0$, one can find a family of $n$ elastically colliding moving balls such that their total energy is equal to 1, the center of the $k$-th  moving ball stays within $\eps$ from the center of the $k$-th pinned ball over the time interval $[0,\eps]$, and the sequence of collisions of the moving balls over the time interval $[0,\eps]$ is $\Gamma$, i.e., the  pairs of moving balls collide in the same order as the corresponding pairs of pinned balls.

We will show in the following example that this strategy does not work. This does not rule out another link between upper bounds for the numbers of collisions of pinned and moving balls.

\begin{example}
Consider discs in the plane: $A=\calB((0,-2),1)$, $B=\calB((0,0),1)$, $C=\calB((-1,\sqrt{3}),1)$, and $D=\calB((1,\sqrt{3}),1)$. 
The pseudo-velocity of $A$ after the $j$-th collision will be denoted $v_A(j)$, $j\geq 0$, with $v_A(0)$ representing the initial velocity. The pseudo-velocities of other balls will be denoted in an analogous way. 
We start with the following pseudo-velocities: $v_A(0)=(0,2)$, $v_B(0) = (0,1)$, $v_C(0)=v_D(0)=(0,0)$. See Fig.~\ref{fig1}.

\begin{figure} \includegraphics[width=0.4\linewidth]{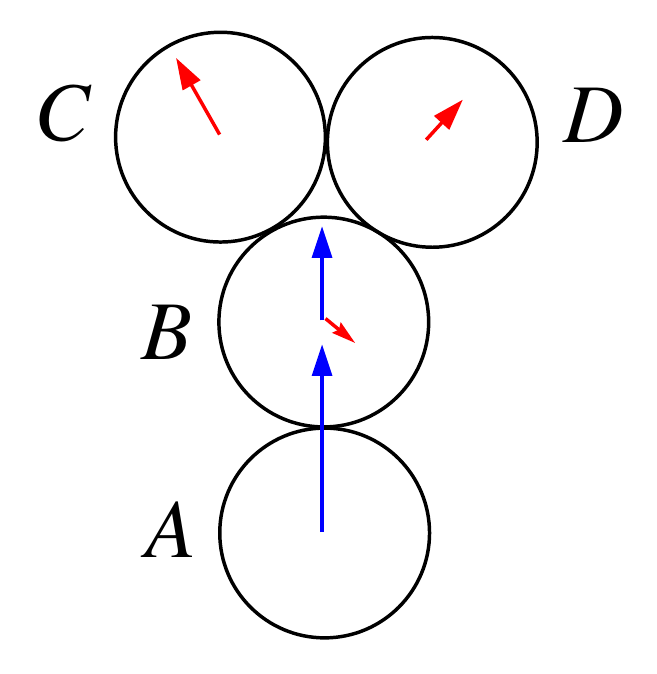}
\caption{
Blue arrows represent initial velocities of balls $A $ and $B$. Red arrows represent velocities of balls $B, C$ and $D$ after collisions of ball $B$ with balls $C$ and $D$, in this order. 
}
\label{fig1}
\end{figure}

We will consider the following sequence of collisions $\Gamma_1$: $BC$, $BD$, $AB$, $BC$. We will list velocities after consecutive collisions. We will give formulas only for the balls that collide. All other velocities remain unchanged if they are not listed on a given line. See Fig.~\ref{fig1}.
\begin{align*}
v_B(1) &= (\sqrt{3}/4, 1/4), \qquad v_C(1) = (-\sqrt{3}/4, 3/4),\\
v_B(2)&=(\sqrt{3}/8, -1/8)  ,\qquad v_D(2)=(\sqrt{3}/8, 3/8),\\
v_A(3)&=(0, -1/8)  ,\qquad  v_B(3)=(\sqrt{3}/8, 2),\\
v_B(4) &= (11 \sqrt{3}/32,43/32), \qquad v_C(4) = (-15 \sqrt{3}/32,45/32).
\end{align*}

Next consider the following sequence of collisions $\Gamma_2$: $BC$, $BD$, $AB$, $BD$, $BC$. The velocities after consecutive collisions are
\begin{align*}
\wt v_B(1) &= (\sqrt{3}/4, 1/4), \qquad \wt v_C(1) = (-\sqrt{3}/4, 3/4),\\
\wt v_B(2)&=(\sqrt{3}/8, -1/8)  ,\qquad \wt v_D(2)=(\sqrt{3}/8, 3/8),\\
\wt v_A(3)&=(0, -1/8)  ,\qquad  \wt v_B(3)=(\sqrt{3}/8, 6),\\
\wt v_B(4) &= (-9 \sqrt{3}/32,25/32), \qquad \wt v_D(4) = (17 \sqrt{3}/32,51/32),\\
\wt v_B(5) &= (-17 \sqrt{3}/64,47/64), \qquad \wt v_C(5) = (-17 \sqrt{3}/64,51/64).
\end{align*}

We will argue that $\Gamma_1$ cannot be approximated by a system of moving billiard balls in the sense of proof strategy (PS) outlined at the beginning of this section. 
Suppose that the balls can move and $\eps>0$ is very small.
Let $\be_1 = (-1/2, \sqrt{3}/2)$ and $\be_2 = (1/2, \sqrt{3}/2)$. These are unit vectors pointing from the center of $B$ in the directions of the centers of $C$ and $D$, resp.
Immediately after the second collision, 
balls $B$ and $D$ will touch so the distance between balls $B$ and $C$ will be greater than that between $B$ and $D$. 
We have 
\begin{align*}
\langle \be_1, v_C(3)-v_B(3)\rangle=-7 \sqrt{3}/16 >-13\sqrt{3}/16 =\langle \be_2, v_D(3)-v_B(3)\rangle
\end{align*}
so if  $B$ undergoes a collision with either $C$ or $D$ at time 4,  the collision $BD$ must precede $BC$. In other words, $\Gamma_2$ might be represented by moving balls as in (PS) but $\Gamma_1$ cannot.
\end{example}

\section{Comparison with earlier results}\label{comp}

We will compare the bound given in Theorem \ref{au21.7} with some bounds proved in \cite{fold}.

\begin{definition}\label{s21.1}
(i)
Consider a family of pinned balls $\bF$ and recall the definition of $z_{jk}$ from  \eqref{au20.2}.

Consider a graph $G_1=(\calV_1, \calE_1)$ associated with $\bF$, suppose that $(i_1, i_2) \in \calE_1$ and let $\calE_2 = \calE_1 \setminus \{(i_1, i_2)\}$.
Let $\alpha_*(G_1, (i_1, i_2))$ be the distance from $  z _{i_1 i_2}$ to the linear subspace spanned by $\left\{ z _{kj}, (k,j) \in \calE_2\}\right\}$.

We define $\alpha(\bF)$ 
to be the minimum of all \emph{strictly positive} values of 
$\alpha_*(G_1, (i_1, i_2))$ over all graphs $G_1=(\calV_1, \calE_1)$ associated with $\bF$ and all $(i_1, i_2)\in \calE_1$.

(ii) Let $\calX$ be the set of all points in the plane of the form
$( 2j, 2 k \sqrt{3})$ or $( 2j+1, (2 k+1) \sqrt{3})$ for integers $j$ and $k$. That is, $\calX$ is the set of vertices of a triangular lattice, assuming that we add edges between pairs of points at distance 2.

\end{definition}

The following theorem and corollary were proved in \cite{fold}.

\begin{theorem}\label{au21.6}
(\cite[Thm. 9.3]{fold})
The  number of collisions of $n$ pinned balls in $\R^d$, i.e., the  number of distinct vectors in the sequence $(v(t), t\geq 0)$, is bounded above by
\begin{align}\label{s28.2}
\left( \frac{ 2^{21/2} d n^5 }{\alpha(\bF)}\right)^{\tau_d n/2-1}.
\end{align}
\end{theorem}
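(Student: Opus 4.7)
My plan is to translate the problem to a statement about foldings via \eqref{au21.2} and to argue by induction on the number of edges $\ell=|\calE_1|$ of the subgraph $G_1=(\calV_1,\calE_1)$ actually exercised by the sequence $\Gamma$. Each pseudo-collision is then a folding $\calF_{H_{ij}}$ in $\R^{nd}$ with unit normal $z_{ij}$ given in \eqref{au20.2}; since foldings preserve $|v|$, the orbit lies on a sphere of radius $|v(0)|$, and the number of distinct half-spaces involved is at most $|\calE|\leq \tau_d n/2$. The goal is therefore a bound of the form $N(\ell,\alpha)\leq (C(n,d)/\alpha)^{\ell-1}$ with $\alpha=\alpha(\bF)$.

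The base case $\ell=1$ is immediate: a single folding produces at most one jump. For the inductive step, I would fix an edge $e\in\calE_1$ with $\alpha_*(G_1,e)>0$, so that $\alpha_*(G_1,e)\geq \alpha(\bF)$ by definition of $\alpha(\bF)$ as a minimum over strictly positive values. (The case where every $\alpha_*(G_1,e)=0$ means the $z_{ij}$ are so dependent that one can reduce to a strictly smaller sub-system without losing any foldings.) Set $\calE_2=\calE_1\setminus\{e\}$ and $V=\operatorname{span}\{z_{kj}:(k,j)\in\calE_2\}$. Partition the sequence of foldings by the successful applications of $\calF_{H_e}$: each maximal sub-run between two such applications uses only foldings indexed by $\calE_2$ and hence contains at most $N(\ell-1,\alpha)\leq (C/\alpha)^{\ell-2}$ jumps by induction. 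It therefore suffices to bound by $C(n,d)/\alpha(\bF)$ the number $m$ of successful applications of $\calF_{H_e}$.

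The key observation for bounding $m$ is that foldings indexed by $\calE_2$ preserve $\pi_{V^\perp}(v)$, whereas each successful $\calF_{H_e}$ moves $\pi_{V^\perp}(v)$ by a nonzero scalar multiple of the fixed vector $\pi_{V^\perp}(z_e)$, whose norm is at least $\alpha(\bF)$. Thus $\pi_{V^\perp}(v_k)$ traces points on a single affine line in the direction $\pi_{V^\perp}(z_e)$, and the sequence of its positions is constrained by preservation of $|v_k|$, by the sign alternation of $\langle v_k,z_e\rangle$ at successive jumps in $e$, and by a controlled drift of $\langle v,z_e\rangle$ under $\calE_2$-foldings (controlled because the explicit entries of $z_{ij}$ from \eqref{au20.2} are bounded by $1/\sqrt{2}$ in $\R^{nd}$ and $|v|$ is fixed). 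Converting these constraints into a quantitative bound yields $m\leq C(n,d)/\alpha(\bF)$ with $C(n,d)=2^{21/2}\,dn^5$, and the recursion then unrolls to \eqref{s28.2}.

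The main obstacle is the last step: extracting, from the sign-flipping constraint on $\langle v_k,z_e\rangle$ combined with the bounded drift of $\langle v,z_e\rangle$ under $\calE_2$-foldings, a definite per-jump decrement of order $\alpha(\bF)/\mathrm{poly}(n,d)$ for some monotone potential. The most natural candidate potentials are the signed distance from $v_k$ to a carefully chosen hyperplane close to $\partial H_e$, or the coordinate of $v_k$ along $\pi_{V^\perp}(z_e)$; either way, one must bound the number of $\calE_2$-jumps inside a single sub-run by invoking the inductive hypothesis to convert a large number of sign flips of $\langle v,z_e\rangle$ into a contradiction. The polynomial factor $dn^5$ in $C(n,d)$ absorbs both the combinatorial overhead of counting $\calE_2$-foldings between successive applications of $\calF_{H_e}$ and the geometric overhead of projecting onto the direction $\pi_{V^\perp}(z_e)$.
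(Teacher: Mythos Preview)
The paper does not contain a proof of Theorem~\ref{au21.6}; it is quoted from \cite{fold} purely for comparison with the new bound \eqref{f16.1}. So there is no proof in the present paper to compare your proposal against.

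That said, your proposal is only an outline, and the decisive step is missing. You correctly set up the induction on $\ell=|\calE_1|$ and the partition of the orbit by successful applications of $\calF_{H_e}$; you also correctly observe that $\calE_2$-foldings preserve $\pi_{V^\perp}(v)$ while each successful $e$-folding moves $\pi_{V^\perp}(v)$ by a scalar multiple of $\pi_{V^\perp}(z_e)$, with $|\pi_{V^\perp}(z_e)|\geq\alpha(\bF)$. But the claim ``Converting these constraints into a quantitative bound yields $m\leq C(n,d)/\alpha(\bF)$'' is asserted, not proved, and you yourself flag it as ``the main obstacle.'' The difficulty is real: the scalar in front of $\pi_{V^\perp}(z_e)$ at each $e$-jump is $-2\langle v,z_e\rangle$, which can be arbitrarily small, and the $\calE_2$-foldings between two $e$-jumps can push $\langle v,z_e\rangle$ back across zero without any lower bound on the increment of your proposed potential. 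Neither the sign-alternation nor the sphere constraint by itself forces a definite per-jump decrement of order $\alpha(\bF)/\mathrm{poly}(n,d)$; one needs an additional monotone quantity (in \cite{fold} this role is played by a carefully constructed potential tied to the specific geometry of the $z_{ij}$) and you have not produced one. The side remark that ``every $\alpha_*(G_1,e)=0$'' allows reduction to a smaller system is also not justified: linear dependence of the $z_{ij}$ does not by itself make any single folding redundant.

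For what it is worth, the inductive scheme you describe---peel off one half-space, bound the number of returns to it, and apply the inductive hypothesis on the runs in between---is exactly the structure the present paper uses in Lemma~\ref{f11.1} to prove the different recursion $N(\ell,r)\leq 308\,r^{-5}N(\ell-1,r)$. There the ``monotone potential'' is the distance $|v_j-w_0|$ to the center of an inscribed ball, and Step~2 of that proof supplies the missing per-jump decrement. Your outline would need an analogous device adapted to $\alpha(\bF)$ in place of $r$.
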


\begin{corollary}\label{m5.1}
(\cite[Cor. 9.4]{fold})
(i) If the full graph associated with the family of $n$ pinned balls is a tree then the number of collisions is not greater than
\begin{align}\label{s28.3}
\left( 2^{10} d n^6 \right)^{\tau_d n/2-1}.
\end{align}

(ii) 
Suppose that $d=2$ and the centers of pinned balls $\bF$ belong to the triangular lattice $\calX$. Then the number of collisions is not greater than
\begin{align}\label{s28.4}
 \left( 10^6 n^{11/2}  4^{4n}\right)^{3 n-1}.
\end{align}

\end{corollary}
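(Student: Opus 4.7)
The plan is to deduce both bounds directly from Theorem \ref{au21.6} by producing a lower bound on the geometric quantity $\alpha(\bF)$ under the respective hypotheses. Since \eqref{s28.2} depends on the configuration only through the factor $1/\alpha(\bF)$, any bound $\alpha(\bF)\geq \alpha_0$ immediately gives an upper bound on the number of collisions by substitution.

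For part (i), the full graph is a tree, so every associated subgraph $(\calV_1,\calE_1)$ is a forest. The key structural fact I would exploit is that in a forest the edge vectors $\{\tilde z_{jk}:(j,k)\in\calE_1\}$ are linearly independent, with a quantitative version: for any edge $(i_1,i_2)\in\calE_1$, the distance from $z_{i_1 i_2}$ to the span of the remaining $z_{jk}$'s is bounded below by an explicit function of $n$. The cleanest case is a leaf edge $(i,p)$ with $i$ of degree one in $\calV_1$: then $\tilde z_{ip}$ has a component $x_i-x_p$ of length $2$ at coordinate block $[i]$, while every other edge vector vanishes identically on $[i]$. This alone yields $\alpha_*(G_1,(i,p))\geq 2/|\tilde z_{ip}|=1/\sqrt{2}$. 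For an interior edge, removing it splits the forest; I would peel off leaves inductively on each side and track how orthogonal projections onto the ``isolated'' coordinates at the surviving endpoints of $(i_1,i_2)$ are preserved. A bound of the form $\alpha(\bF)\geq c/n$ inserted into \eqref{s28.2}, together with elementary simplification, yields \eqref{s28.3}.

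For part (ii), the restriction to the triangular lattice $\calX$ forces every edge vector to come from one of only six unit directions (sixth roots of unity). This creates genuine near-dependencies: any hexagonal cycle produces six edge vectors whose directional sum vanishes, so one cannot hope for $\alpha(\bF)\gtrsim 1/n^{O(1)}$. My approach would be to compute (or at least lower-bound) the smallest singular value of the matrix with columns $\tilde z_{jk}$, $(j,k)\in\calE_1$, via a careful accounting of the lattice's cycle structure: each independent cycle contributes an approximate linear relation whose ``defect'' can be controlled by the cycle's length, and combining these contributions on the Gram matrix gives an estimate of the form $\alpha(\bF)\geq c\,4^{-4n}/\sqrt{n}$. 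Substituting this into \eqref{s28.2} with $d=2$ and $\tau_2=6$ (so $\tau_d n/2-1=3n-1$) produces \eqref{s28.4} after collecting constants.

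The main obstacle, in both parts, is obtaining a \emph{quantitative} lower bound on $\alpha(\bF)$ that is uniform over all subgraphs $(\calV_1,\calE_1)$ and all choices of distinguished edge $(i_1,i_2)$. In the tree case the inductive leaf-peeling must be arranged so that the polynomial degree in $n$ does not blow up; in the lattice case the difficulty is that the cycle-induced approximate dependencies must be combined without collapsing onto an exact dependency, which requires a perturbative analysis of the Gram matrix that reflects the precise lattice geometry rather than a generic forest bound.
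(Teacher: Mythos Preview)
This corollary is not proved in the present paper: it is quoted from \cite[Cor.~9.4]{fold} in Section~\ref{comp} solely to compare the earlier bounds with the new Theorem~\ref{au21.7}, so there is no proof here against which to check your attempt. Your overall plan---derive both parts from Theorem~\ref{au21.6} by producing a lower bound on $\alpha(\bF)$ and substituting into \eqref{s28.2}---is the only natural route, and the targets you reverse-engineer ($\alpha(\bF)\geq c/n$ for the tree case, $\alpha(\bF)\geq c\,4^{-4n}/\sqrt{n}$ for the lattice case) are precisely what \eqref{s28.3} and \eqref{s28.4} require.

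One substantive remark on part (ii). Your heuristic that ``any hexagonal cycle produces six edge vectors whose directional sum vanishes, so one cannot hope for $\alpha(\bF)\gtrsim 1/n^{O(1)}$'' conflates the edge directions $x_j-x_k\in\R^d$ with the configuration-space vectors $z_{jk}\in\R^{nd}$. It is true that $\sum_m (x_{j_{m+1}}-x_{j_m})=0$ around any cycle, but each $z_{jk}$ also records \emph{which} two vertex-blocks are nonzero, and this extra structure breaks the relation: already for a single triangle the three $z_{jk}$ are linearly independent. Genuine dependencies among the $z_{jk}$ can occur in the lattice (by a dimension count once the number of edges exceeds $2n$), but not via the cycle mechanism you describe. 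Moreover, by Definition~\ref{s21.1} the quantity $\alpha(\bF)$ is the minimum over \emph{strictly positive} values of $\alpha_*$, so exact dependencies are excluded by fiat; the task is to bound below the smallest nonzero $\alpha_*$ over all subgraphs and distinguished edges, and your Gram-matrix/cycle-defect sketch does not yet supply a mechanism that would force an exponentially small value like $4^{-4n}$. That is the real gap in the proposal.
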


\begin{remark}\label{s11.2}

(i) It was indicated by an example in \cite{fold} that although $\alpha(\bF)$ is strictly positive by definition, $\alpha(\bF)$ can be arbitrarily small. Hence, \eqref{f16.1} is a better bound then \eqref{s28.2} for some configurations of pinned balls. Effective computation of  $\alpha(\bF)$ is very hard in specific cases, as attested by the arguments in \cite{fold}. Thus, the bound \eqref{f16.1} has an advantage of being totally explicit in all cases.

(ii) If the full full graph associated with the family of $n$ pinned balls is a tree then the number of half-spaces $H_{ij}$ defined in Section \ref{fold} is equal to $n-1$. Hence, we can replace the the expression $\tau_d n$ appearing in both \eqref{f16.1} and \eqref{s28.3} by $2n$.
We can also replace $d$ with $2n-1$ because the pseudo-velocities span  a $(2n-1)$-dimensional subspace.
The new bounds are 
\begin{align*}
 2^{-27}  2^{16n}
n^{-27/2}  n^{15n}
\qquad \text{
(improved \eqref{f16.1})},\\
\left( 2^{10} (2n-1) n^6 \right)^{n-2}
\leq 2^{11n-22}n^{-14} n^{7n}
 \qquad \text{
(improved \eqref{s28.3})}.
\end{align*}
Hence, for tree-like pinned ball configurations  the 
bound given in \cite{fold} is better than \eqref{f16.1} for large $n$.

(iii) If $d=2$ and the centers of pinned balls  belong to the triangular lattice $\calX$, and $n$ is large then the bound in \eqref{f16.1} is better than that in \eqref{s28.4} because 
$\tau_2=6$ and, therefore, the most significant factors in the two formulas are $n^{45n/2}$ and $4^{12n^2}$, resp.

(iv) There are both intuitive and theoretical
reasons to think that pinned balls model is closely related to the stage in the moving balls evolution that generates the largest number of collisions. For instance, for appropriate $c,\eps>0$, Theorem 1.3 in \cite{BuD18b} asserts that if a total of more than $n^{cn}$ collisions occur, then at least $n^{\eps n}$ of them will have to occur in an interval of time during which a subset of the balls form a very tight configuration. 

One can use the estimates in \eqref{s26.1}-\eqref{j19.2} as a gauge, to get a feeling of whether the estimates for  pinned balls might be close to optimal.
The most significant factor in \eqref{s26.1} is $n^{(3/2) n^2}$. This is smaller than $n^{(15/2) n^2 }$, the most significant factor in  \eqref{f16.1}, for large $n$, if we take $\ell = n^2/2$. In view of \eqref{s11.1},  if we use $\ell=\tau_d n/2$, the most significant factor in \eqref{f16.1} is $n^{(15/2)\tau_d n/2 }\leq n^{(15/4)2^{0.402d}  n}$ which is
 smaller than $n^{((3/2)5^d+9/2) n }$, the most significant factor in \eqref{j19.2},
for  large $n$ and $d$.

(v) A lower bound for the number of collisions of $n$ pinned balls is $n^3/27$ for $n\geq 3$ and $d\geq 2$. This is the same bound as the one for a family of moving balls, presented in \cite{BD}. The description of the main example and the arguments given in \cite{BD} for moving balls clearly show that the same bound applies to pinned balls. 

\end{remark}

\bibliographystyle{alpha}
\def\cprime{$'$}

\end{document}